\newenvironment{figurehere}
  {\def\@captype{figure}}
  {}
\newtheorem{theorem}{Theorem}[section]
\newtheorem{proposition}[theorem]{Proposition}
\newtheorem{lemma}[theorem]{Lemma}
\newtheorem{corollary}[theorem]{Corollary}
\newtheorem{remark}{Remark}
\newtheorem{definition}[theorem]{Definition}
\newtheorem*{proposition*}{Proposition}
\newtheorem*{theorem*}{Theorem}
\newtheorem*{definition*}{Definition}
\newtheorem*{notation}{Notation}
\newcommand{\C}{\mathbb{C}}
\newcommand{\R}{\mathbb{R}}
\def\H{\mathbb{H}}
\def\Teich{\mathcal{T}}
\def\Tbar{\overline{\Teich}}
\def\pa{\partial}
\def\rar{\rightarrow}
\def\hra{\hookrightarrow}
\def\a{\alpha}
\def\Af{\mathfrak{A}}
\def\b{\beta}
\def\A8{\Af_{\infty}}
\def\l{\lambda}
\def\g{\gamma}
\def\d{\delta}
\def\Si{\Sigma}
\def\e{\varepsilon}
\def\ol#1{\overline{#1}}
\def\dis{\displaystyle}
\def\os#1#2{\overset{#1}{#2}}
\def\bm#1{\text{\boldmath${#1}$}}
\def\gr8{\mathrm{gr}_{\infty}}
\def\Area{\mathrm{Area}}
\def\mod{\mathrm{mod}}
\def\os#1#2{\overset{#1}{#2}}
\begin{document}

\title{A criterion of convergence in the augmented Teichm\"uller space}

\author{Gabriele Mondello}

\address{Imperial College of London\\
Department of Mathematics\\
South Kensington Campus\\
London SW7 2AZ}

\email{g.mondello@imperial.ac.uk}


\begin{abstract}
We prove a criterion of convergence in the augmented Teichm\"uller space
that can be phrased in terms of convergence of the hyperbolic metrics
or of quasiconformal convergence away from the nodes.
\end{abstract}


\maketitle

%
\begin{section}{Introduction}
The purpose of this note is to prove a criterion of convergence
in the augmented Teichm\"uller space $\Tbar(S)$ of a hyperbolic
surface $S$ (Theorem~\ref{thm:convergence}).
Basically, if $[f_n:S\rar\Si_n]$ converges to $[f:S\rar\Si]\in\Tbar(S)$,
then we can find ``good'' representatives $\tilde{f}_n\in[f_n]$
such that $f\circ\tilde{f}_n^{-1}:\Si_n\rar\Si$ displays a ``standard''
behavior near the nodes of $\Si$
(in our terminology, it restricts to a standard map of hyperbolic
annuli around each node of $\Si$). Moreover, we say that
the representatives $\tilde{f}_n$ are ``good'' if
$h_n:=\tilde{f}_n\circ f^{-1}:\Si\rar\Si_n$ enjoy either
of the following properties:
\begin{itemize}
\item[(i)]
$h_n^*(g_n)\rar g$ uniformly away from the nodes of $\Si$,
where $g_n$ is the hyperbolic metric of $\Si_n$
and $g$ is the hyperbolic metric of $\Si$
\item[(ii)]
for every compact $C\subset\Si$ not containing any node,
the distortion $K(h_n)\rar 1$ on $C$.
\end{itemize}

To show that $[f_n]\rar[f]$ implies condition (i) and (ii), we construct
$h_n$ by gluing some ``standard'' maps of hyperbolic
pair of pants, which depend
only on the Fenchel-Nielsen coordinates of $\Si_n$ and $\Si$.
Incidentally, we remark
that the idea of constructing good quasiconformal representatives
for points in $\Teich(S)$ using some ``standard'' map of pair
of pants is also exploited in \cite{bishop:quasiconformal}.

In order to prove that (ii) implies that
$[f_n]\rar[f]$, we prove
the following result of quasiconformal ``modifications''
that might be interesting
of its own (see Section~\ref{sec:modifications}).
\begin{theorem*}
Let $\Delta\subset\C$ be the unit disc, $F\subset\e\ol{\Delta}$
a compact Jordan domain (for some $\e<1$)
and $g:\ol{\Delta}\setminus F\rar \ol{\Delta}$
a $K$-quasiconformal map
which is a homeomorphism onto its image.
\begin{itemize}
\item[(a)]
There exists a $\tilde{K}$-quasiconformal homeomorphism
$\tilde{g}:\ol{\Delta}\rar\ol{\Delta}$
that coincides with $g$ on $\pa\Delta$.
Moreover, $\tilde{K}$ depends only on $K$ and $\e$,
and $\tilde{K}\rar 1$ as $K\rar 1$ and $\e\rar 0$.
\item[(b)]
If $0\in g(F)$, then
there exists a $\hat{K}$-quasiconformal homeomorphism
$\hat{g}:\ol{\Delta}\rar\ol{\Delta}$
that coincides with $g$ on $\pa\Delta$
and such that $\hat{g}(0)=0$. Moreover, $\hat{K}$
depends only on $K$ and $\e$, and
$\hat{K}\rar 1$ as $K\rar 1$ and $\e\rar 0$.
\end{itemize}
\end{theorem*}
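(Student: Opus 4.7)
The plan for (a) is to extend $g$ by a ``gluing'' construction: set $\tilde g=g$ on $A:=\ol{\Delta}\setminus F^\circ$ and construct a quasiconformal fill-in $\tilde g|_F:F\to F_1$. The requirement that $\tilde g$ be a self-homeomorphism of $\ol{\Delta}$ forces $g(\pa\Delta)=\pa\Delta$, so $g(A)=\ol{\Delta}\setminus F_1^\circ$ for some compact Jordan domain $F_1\subset\ol{\Delta}$. By the modulus estimate $\mod(A)\ge\frac{1}{2\pi}\log(1/\e)$ and the $K$-quasiconformality of $g$, one has $\mod(g(A))\ge\frac{1}{2\pi K}\log(1/\e)$, which implies $F_1$ is contained in a disk of radius $\le\e^{1/K}$ about a suitable center.

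To construct the fill-in, let $\phi:\ol{\Delta}\to F$ and $\phi_1:\ol{\Delta}\to F_1$ be the Carath\'eodory extensions of the Riemann maps and consider the boundary correspondence $\rho:=\phi_1^{-1}\circ g\circ\phi:\pa\Delta\to\pa\Delta$. The key step is to show $\rho$ is quasisymmetric with constant $\eta=\eta(K,\e)$ tending to $1$ as $K\to 1$ and $\e\to 0$. The plan is to pass to the round-annulus model: conformally uniformize $A$ and $g(A)$ to round annuli $\{r\le|z|\le 1\}$ and $\{r'\le|z|\le 1\}$ with $r\le\e$ and $r'\le\e^{1/K}$; the induced $K$-quasiconformal map has quasisymmetric restriction to the smooth inner circles with constant tending to $1$ as $K\to 1$, and the comparison between Jordan-domain and annular uniformizations should introduce only asymptotically trivial ``welding'' distortion in the large-modulus limit. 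Extending $\rho$ via Beurling--Ahlfors (or Douady--Earle) to a QC self-map $\tilde\rho$ of $\ol{\Delta}$, one sets $\tilde g|_F:=\phi_1\circ\tilde\rho\circ\phi^{-1}$, and the assembled $\tilde g$ is a QC self-homeomorphism of $\ol{\Delta}$ with the desired bound on $\tilde K$.

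For (b), if $0\in F$, apply (a) to obtain $\tilde g$ and then post-compose with a QC self-map $\beta$ of $\ol{\Delta}$ that is the identity on $\pa\Delta$ and sends $\tilde g(0)$ to $0$. Since $\tilde g(F)=F_1$ by construction, $\tilde g(0)\in F_1$, and using that $0\in F_1$ and $F_1$ has diameter $O(\e^{1/K})$, one gets $|\tilde g(0)|\le 2\e^{1/K}$; such $\beta$ can be realized with distortion tending to $1$ (for instance by $z\mapsto z+\chi(z)(0-\tilde g(0))$ for a suitable cutoff $\chi$ supported near $0$), giving $\hat K\to 1$. If $0\notin F$, first pre-compose $g$ with a QC self-map $\alpha$ of $\ol{\Delta}$ that is the identity on $\pa\Delta$ with $\alpha^{-1}(0)\in F$---possible with distortion tending to $1$ since the required displacement is at most $\e$---and apply the previous case to $g\circ\alpha^{-1}$.

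The main obstacle is the quantitative quasisymmetric estimate on $\rho$: the welding associated to a general Jordan curve $\pa F$ need not be quasisymmetric, but the smallness of $\e$ (hence largeness of $\mod(A)$) should force the welding to be asymptotically trivial, uniformly over all Jordan domains $F\subset\e\ol{\Delta}$. Making this precise requires a careful extremal-length analysis comparing the two natural parametrizations of $\pa F$ coming from the Jordan-domain and annular Riemann maps; once this is in hand, the continuity of the Beurling--Ahlfors extension with respect to the QS constant closes the argument.
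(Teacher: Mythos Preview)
Your approach to (a) has a genuine gap at precisely the point you flag as the ``main obstacle.'' You need $\rho=\phi_1^{-1}\circ g\circ\phi$ to be quasisymmetric with a constant depending only on $(K,\e)$ and tending to~$1$. But $\rho$ factors as $w_{F_1}^{-1}\circ(\text{QS map in the round-annulus model})\circ w_F$, where $w_F$ (resp.\ $w_{F_1}$) is the welding comparing the interior Riemann-map parametrization of $\pa F$ (resp.\ $\pa F_1$) with the annular one. For an arbitrary Jordan curve the welding need not be quasisymmetric at all, and---crucially---it is a conformal invariant of the curve, hence unchanged under scaling. If $F=\e F_0$ for a fixed $F_0$ whose boundary is not a quasicircle, then as $\e\rar 0$ the annular welding of $\pa F$ converges to the classical conformal welding of $\pa F_0$, which is not quasisymmetric; so your hoped-for ``asymptotic triviality in the large-modulus limit'' is simply false. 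Any bound on $\rho$ you could obtain would depend on the geometry of $\pa F$, not only on $(K,\e)$. In effect you are attempting the strictly stronger statement that $\tilde g$ can be chosen to agree with $g$ on all of $\ol\Delta\setminus F$, and this stronger statement does not hold with a constant uniform over Jordan domains $F\subset\e\ol\Delta$.

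The paper sidesteps the welding issue by never working with $\pa F$. It enlarges $F$ to the round disc $\sqrt{\e}\,\ol\Delta$ and invokes an annulus-extension lemma (Lemma~\ref{lemma:extension}, from Lehto--Virtanen), valid because the inner boundary is now a \emph{circle}, to extend $g$ to a quasiconformal self-homeomorphism of $\ol\Delta$ that remains $K$-quasiconformal on $A(\sqrt\e)$. The core of the argument is then a quantitative distortion estimate (Proposition~\ref{prop:distortion}): for any quadrilateral $\Delta(z_1,\dots,z_4)$ with vertices on $\pa\Delta$, the modulus is distorted by at most a factor $K_\e$ with $K_\e\rar K$ as $\e\rar 0$; \emph{this} is where the large modulus is actually exploited, via an area/isoperimetric comparison. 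Hence $g|_{\pa\Delta}$ is $K_\e$-quasisymmetric, and one takes $\tilde g$ to be the Beurling--Ahlfors extension of those boundary values alone, discarding $g$ in the interior. Your outline for (b) is close in spirit to the paper's once (a) is available; the paper bounds $|\tilde g(0)|$ via Gr\"otzsch's inequality and then corrects by an explicit half-plane stretch (Lemma~\ref{lemma:translation}).
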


In Section~\ref{sec:collar} we recall the collar lemma
and some geometry of hyperbolic hexagons, which are
used in Section~\ref{sec:standard} to construct the
standard maps between annuli and between pair of pants.
Basic facts and notations about the Teichm\"uller space
and its augmentation are recalled in Section~\ref{sec:teichmueller},
together with Fenchel-Nielsen coordinates.
Finally, in Section~\ref{sec:convergence}
the convergence criterion is proven.

We will use this criterion in \cite{mondello:triangulations}
to prove (roughly speaking) that the modulus of a Riemann
surface, which is constructed by gluing flat
and hyperbolic tiles along a graph,
varies continuously with respect to the length parameters of
the graph.

We want just to stress that the statement of Theorem~\ref{thm:convergence}
looks quite intuitive and is probably folkloristically accepted,
even though we were not able to find a precise reference.

Similar considerations hold for Theorem~\ref{thm:criterion},
even though we have not seen a similar result stated in the literature.
\begin{subsection}{Acknowledgements}
I would like to thank Enrico Arbarello
and Dmitri Panov for useful discussions.
\end{subsection}
\end{section}
%
%
\begin{section}{Quasiconformal modifications}\label{sec:modifications}
\begin{definition}
Given a bounded Jordan domain $S\subset\C$ and distinct points
$z_1,z_2,z_3,z_4\in\pa S$, call $S(z_1,z_2,z_3,z_4)$ the quadrilateral
with vertices $z_1,\dots,z_4$ and whose interior is $S$.
The portions of $\pa S$ running from $z_1$ to $z_2$ and from
$z_3$ to $z_4$ are called $a$-sides; the portions from $z_2$ to $z_3$
and from $z_4$ to $z_1$ are called $b$-sides. The {\it modulus} of
the quadrilateral is
\[
\mathrm{mod}(S(z_1,z_2,z_3,z_4))=\inf_{\rho\in\mathcal{R}}
\frac{\mathrm{Area}_\rho(S)}{\inf_{\gamma\in\mathcal{F}}\ell_\rho^2(\gamma)}
\]
where $\mathcal{F}$ is the set of smooth curves in $S$ that join the two
$b$-sides of $S(z_1,z_2,z_3,z_4)$ and $\mathcal{R}$ is the set of smooth metrics
on $S$.
\end{definition}

Given $z_1,z_2,z_3,z_4\in\C$ distinct points, we denote
by $R(z_1,z_2,z_3,z_4)$ the quadrilateral that has them
as vertices. One can show that the modulus of the rectangle
$R(0,a,a+ib,ib)$ is $a/b$ (Gr\"otzsch).

\begin{definition}
A biholomorphism of quadrilaterals $f:S(z_1,\dots,z_4)\rar S'(w_1,\dots,w_4)$
is a biholomorphism $f:S\rar S'$ whose extension $\ol{f}:\ol{S}\rar\ol{S}'$
takes $z_i$ to $w_i$ for $i=1,\dots,4$.
\end{definition}

The modulus of a quadrilateral is clearly invariant under biholomorphisms.

\begin{definition}
An {\it annular domain} is a Riemann surface
$A$ homeomorphic to an open annulus.
Its {\it modulus} is defined as
\[
\mathrm{mod}(A)=\inf_{\rho\in\mathcal{R}}
\frac{\mathrm{Area}_\rho(A)}{\inf_{\gamma\in\mathcal{F}}\ell_\rho^2(\gamma)}
\]
where $\mathcal{F}$ is the set of smooth curves in $S$ that generate $\pi_1(A)$
and $\mathcal{R}$ is the set of smooth metrics on $A$.
\end{definition}

It follows from Gr\"otzsch's result on quadrilaterals
that, for $0<r<1$, the standard annulus
$A(r)=\{z\in\C\,|\, r<|z|<1\}$ (with $0<r<1$) has
modulus $-(2\pi)^{-1}\log(r)$.
\begin{lemma}[\cite{mcmullen:conformal}]\label{lemma:isop}
Let $D,R\subset \C$ be Jordan domains with finite area,
such that $D$ is contained
in $R^\circ$ and is compact. Then,
\[
\frac{\Area(D)}{\Area(R)}\leq\frac{1}{1+4\pi\,\mod
(R\setminus D)}
\]
where areas are taken with respect to the Euclidean measure.
\end{lemma}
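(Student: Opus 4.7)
The plan is to use the Euclidean metric on $R\setminus D$ as a trial metric in the variational characterization of the modulus, and then invoke the classical planar isoperimetric inequality to control the length of generating curves from below in terms of $\Area(D)$.

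More precisely, I would argue as follows. Since $D$ is compactly contained in the interior of $R$, the set $A:=R\setminus D$ is an annular domain, and any smooth simple closed curve $\g\subset A$ generating $\pi_1(A)$ separates $D$ from $\pa R$; consequently the bounded component of $\C\setminus\g$ contains $D$. The planar isoperimetric inequality then gives $\ell_{\mathrm{eucl}}(\g)^2\geq 4\pi\Area(D)$. Taking the Euclidean metric $\rho_0$ as a test metric in the definition of $\mod(A)$, one has $\Area_{\rho_0}(A)=\Area(R)-\Area(D)$, and therefore
\[
\mod(R\setminus D)\leq\frac{\Area(R)-\Area(D)}{\inf_{\g}\ell_{\mathrm{eucl}}(\g)^2}\leq\frac{\Area(R)-\Area(D)}{4\pi\Area(D)}.
\]
Rearranging this inequality yields $\Area(D)\bigl(1+4\pi\mod(R\setminus D)\bigr)\leq\Area(R)$, which is exactly the claim.

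The one point that needs a bit of care is that the family $\mathcal{F}$ in the definition of $\mod(A)$ consists of arbitrary smooth curves generating $\pi_1(A)$, not merely simple ones. This is the main (though minor) obstacle: a non-simple generator need not bound a Jordan region enclosing $D$. However, one can either observe that the infimum over $\mathcal{F}$ is unchanged if one restricts to smooth simple generators (any generator is freely homotopic to a simple one, along which one can only decrease length by shortening), or invoke the more general form of the isoperimetric inequality for rectifiable cycles with winding number $\geq 1$ around every point of $D$, which still yields $\ell(\g)^2\geq 4\pi\Area(D)$. Either way the estimate above goes through, and the lemma follows.
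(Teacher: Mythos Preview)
Your proof is correct and follows the same approach as the paper: use the Euclidean metric as a test metric for $\mod(R\setminus D)$ and bound the length of generating curves from below via the planar isoperimetric inequality $\ell(\g)^2\geq 4\pi\,\Area(D)$, then rearrange. The paper's argument is just a terser version of yours and in fact glosses over the simple-versus-non-simple curve issue that you took the trouble to address.
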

\begin{proof}
Let $\Gamma$ be the set of simple closed curves in the annulus
$R\setminus D$ which
are homotopically nontrivial. The isoperimetric inequality
in the plane says that the Euclidean length
of $\g$ satisfies $\ell^2(\g)\geq 4\pi\Area(D)$.
By definition,
\[
\mod(R\setminus D)\leq \frac{\Area(R\setminus D)}{\ell^2(\g)}
\]
and the result follows.
\end{proof}

The following is similar to Theorem I.8.3 of
\cite{lehto-virtanen:quasiconformal}.

\begin{proposition}\label{prop:distortion}
Let $\Delta\subset\C$ be the unit disc and $F\subset\Delta$ a compact Jordan
domain contained in the smaller disc $\e\ol{\Delta}$ with $0<\e<1$.
Consider a homeomorphism
$g:\ol{\Delta}\rar\ol{\Delta}$
whose restriction to $\ol{\Delta}\setminus F$ is $K$-quasiconformal.

Then, for every distinct $z_1,\dots,z_4\in\pa\Delta$,
we have
\begin{equation*}
\frac{M}{K_\e}\leq M'\leq K_\e M \quad
\text{with}\ K_\e\rar K\ \text{as}\ \e\rar 0
\end{equation*}
where $M=\mod(Q)$ and $Q=\Delta(z_1,z_2,z_3,z_4)$
(similarly, $M'=\mod(Q')$ and
$Q'=g(Q)$).
In particular, one can take
\[
K_\e=K\left(1+\frac{\beta_0-1+\beta_1(1-\sqrt{\e})^{-2}}{1+\log(1/\e)}\right)
\,
\]
where $\beta_0>1$ and $\beta_1>0$ are universal constants.
\end{proposition}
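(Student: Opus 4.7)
The plan is to prove the upper bound $M'\le K_\e M$; the lower bound $M/K_\e\le M'$ then follows by applying the same argument to the inverse $g^{-1}\colon\ol\Delta\to\ol\Delta$, which is again a homeomorphism $K$-quasiconformal on $\ol\Delta\setminus g(F)$. Through the length--area characterization of modulus, the upper bound is equivalent to the following claim: for every metric $\rho$ on $Q$ admissible for the family $\mathcal{F}$ of curves joining the $b$-sides (i.e.\ $\ell_\rho(\gamma)\ge 1$ for every $\gamma\in\mathcal{F}$), one can construct a metric $\sigma$ on $Q'$ admissible for $\mathcal{F}'$ with $\Area_\sigma(Q')\le K_\e\,\Area_\rho(Q)$; the infimum over $\rho$ then yields $M'\le K_\e M$.

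The first geometric input is the smallness of $F$ and $g(F)$ inside $\Delta$. Monotonicity of annular moduli under inclusion gives $\mod(\Delta\setminus F)\ge\mod(\Delta\setminus\e\ol\Delta)=\log(1/\e)/2\pi$, and since $g$ is $K$-quasiconformal on $\ol\Delta\setminus F$ (hence distorts annular moduli by at most $K$), also $\mod(\Delta\setminus g(F))\ge\log(1/\e)/(2\pi K)$. Applying Lemma~\ref{lemma:isop} with $R=\Delta$ to $D=F$ and to $D=g(F)$ produces Euclidean area bounds of order $1/(1+\log(1/\e))$; this is the source of the denominator $1+\log(1/\e)$ in $K_\e$.

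The construction of $\sigma$ is carried out via a buffer annulus $A=\Delta\setminus\sqrt\e\,\ol\Delta$: since $F\subset\e\ol\Delta\subset\sqrt\e\,\ol\Delta$, the restriction $g|_A$ is still $K$-quasiconformal, so that Euclidean lengths and areas are fully controlled there. On the outer piece $g(A)=Q'\setminus g(\sqrt\e\,\ol\Delta)$, I set $\sigma$ equal to the $g$-pushforward of $\rho$, so that a classical Gr\"otzsch-type estimate yields $\Area_\sigma(g(A))\le K\,\Area_\rho(A)\le K\,\Area_\rho(Q)$ and $\ell_\sigma(\gamma')\ge\ell_\rho(g^{-1}\gamma')\ge 1$ for every $\gamma'\in\mathcal{F}'$ staying inside $g(A)$. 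On the inner piece $g(\sqrt\e\,\ol\Delta)$, I set $\sigma$ equal to a constant $c$ chosen large enough to compensate for curves that enter this region: for $\gamma'\in\mathcal{F}'$ that enters $g(\sqrt\e\,\ol\Delta)$ and with $\gamma=g^{-1}\gamma'$, the requirement $\ell_\sigma(\gamma')\ge 1$ amounts to $c\cdot\ell_{\mathrm{eucl}}(\gamma'\cap g(\sqrt\e\,\ol\Delta))\ge\ell_\rho(\gamma\cap\sqrt\e\,\ol\Delta)$. A Cauchy--Schwarz estimate applied to $\int_{\gamma\cap\sqrt\e\,\ol\Delta}\rho\,ds$, in terms of $\int\rho^2\,ds$ and the Euclidean length, together with the observation that every $\gamma\in\mathcal{F}$ must traverse $A$ along a path of Euclidean length of order $1-\sqrt\e$, bounds the required size of $c$ in terms of $\sqrt{\Area_\rho(Q)}/(1-\sqrt\e)$; this is the origin of the factor $(1-\sqrt\e)^{-2}$ in $K_\e$.

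The main obstacle is the fine balance in this last step: $c$ must be large enough to enforce admissibility for every curve entering $g(\sqrt\e\,\ol\Delta)$, while keeping the area contribution $c^2\,\Area(g(\sqrt\e\,\ol\Delta))$ of the right order. Pairing the $(1-\sqrt\e)^{-2}$ factor with the area bound $\Area(g(\sqrt\e\,\ol\Delta))\lesssim 1/(1+\log(1/\e))$ coming from Lemma~\ref{lemma:isop}, and combining with the classical $K$-qc distortion on the outer part, leads, after grouping the universal constants arising from the isoperimetric and Cauchy--Schwarz steps into $\beta_0$ and $\beta_1$, to the explicit form $K_\e=K\bigl(1+(\beta_0-1+\beta_1(1-\sqrt\e)^{-2})/(1+\log(1/\e))\bigr)$ stated in the proposition.
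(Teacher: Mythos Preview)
Your argument has a genuine gap at the decisive step: the choice of the constant $c$ on the inner disc. For $\sigma$ to be admissible you need, for every $\gamma'\in\mathcal F'$ entering $g(\sqrt\e\,\ol\Delta)$ (with $\gamma=g^{-1}\gamma'$),
\[
c\cdot\ell_{\mathrm{eucl}}\bigl(\gamma'\cap g(\sqrt\e\,\ol\Delta)\bigr)\ \ge\ \ell_\rho\bigl(\gamma\cap\sqrt\e\,\ol\Delta\bigr).
\]
The Cauchy--Schwarz inequality you invoke,
\[
\int_{\gamma\cap\sqrt\e\,\ol\Delta}\rho\,ds\ \le\ \Bigl(\int_{\gamma\cap\sqrt\e\,\ol\Delta}\rho^{2}\,ds\Bigr)^{1/2}\bigl(\ell_{\mathrm{eucl}}(\gamma\cap\sqrt\e\,\ol\Delta)\bigr)^{1/2},
\]
involves the \emph{line} integral $\int_\gamma\rho^{2}\,ds$, which is not controlled by $\Area_\rho(Q)=\int_Q\rho^{2}\,dA$; for a general admissible $\rho$ it can be arbitrarily large. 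Concretely, if $\rho$ is concentrated along a narrow tube through $\sqrt\e\,\ol\Delta$, then $\ell_\rho(\gamma\cap\sqrt\e\,\ol\Delta)$ is close to $1$ for some $\gamma$, while $\ell_{\mathrm{eucl}}(\gamma'\cap g(\sqrt\e\,\ol\Delta))$ can be made as small as one wishes by letting $\gamma'$ merely graze the inner disc; moreover on $\sqrt\e\,\ol\Delta\supset F$ you have no quasiconformal control relating the two Euclidean lengths. Hence no $c$ depending only on $K$, $\e$ and $\Area_\rho(Q)$ makes $\sigma$ admissible. The observation that $\gamma$ must cross $A$ with Euclidean length $\gtrsim 1-\sqrt\e$ bounds the wrong piece of the curve. (There is also a smaller issue: deducing the lower bound from $g^{-1}$ does not yield the \emph{same} $K_\e$, since $g(F)$ need not sit in $\e\ol\Delta$; the clean symmetry is the reciprocity $\mod(\Delta(z_1,z_2,z_3,z_4))\cdot\mod(\Delta(z_2,z_3,z_4,z_1))=1$.)

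The paper sidesteps this difficulty by a different mechanism. It first applies the extension Lemma~\ref{lemma:extension} on $\sqrt\e\,\Delta$ to make $g$ a global homeomorphism of $\ol\Delta$ that is $K$-quasiconformal on $A(\sqrt\e)$ and $KC$-quasiconformal on $\sqrt\e\,\Delta$, with $C=\beta_0+\beta_1(1-\sqrt\e)^{-2}$; this is the actual source of the constants $\beta_0,\beta_1$ and of the factor $(1-\sqrt\e)^{-2}$ in $K_\e$, not an isoperimetric or Cauchy--Schwarz step. After conformally mapping $Q,Q'$ to standard rectangles, it subdivides into a fine grid, applies Rengel's inequality rectangle by rectangle, and uses Schwarz's inequality on the resulting array of moduli (not on a line integral of $\rho$). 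The extra factor $C$ enters only on the small set carrying the image of $\sqrt\e\,\Delta$, whose relative Euclidean area in the rectangle is bounded by $1/(1+\log(1/\e))$ via Lemma~\ref{lemma:isop}; this produces the denominator in $K_\e$.
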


\begin{notation}
For $0<r<1$, call $\mu(r)$ the modulus of Gr\"otzsch
extremal domain $\Delta\setminus[0,r]$ and let
$\l(K)=(\mu^{-1}(\pi K/2))^{-2}-1$.
\end{notation}

As $\mu$ is continuous and
strictly decreasing from $+\infty$ to zero,
the function $\l$ is continuous and strictly increasing from zero
to $+\infty$. It can be shown that $\lambda(1/K)=
1/\lambda(K)$, so that $\lambda(1)=1$
(see \cite{lehto-virtanen:quasiconformal}).

\begin{lemma}[Gr\"otzsch]
Let $0\leq r<1$ and let
$A\subset\Delta$ be an open annular domain that separates
$\{0,r\}$ from $\pa\Delta$. Then, $\mod(A)\leq\mu(r)$
and equality is attained if and only if $A=\Delta\setminus[0,r]$.
\end{lemma}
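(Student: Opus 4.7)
The plan is to reduce the estimate to the extremal case $A=\Delta\setminus[0,r]$ via two successive monotonicity steps: first I enlarge $A$ by filling in its bounded complementary continuum, and then I circularly symmetrize that continuum about the positive real axis.

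First I would let $E$ denote the bounded component of $\Delta\setminus A$, which is a continuum in $\Delta$ containing $\{0,r\}$ because $A$ separates these points from $\pa\Delta$. Since $E$ contains more than one point, $\Delta\setminus E$ is an annular domain that contains $A$ as an annular subdomain in the same homotopy class, so the standard monotonicity of the conformal modulus under such annular inclusions gives $\mod(A)\leq\mod(\Delta\setminus E)$, and it remains to bound $\mod(\Delta\setminus E)\leq\mu(r)$.

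Next I would apply circular symmetrization of $E$ about the positive real axis: for each $s\in(0,1)$ with $E\cap\{|z|=s\}\neq\emptyset$, replace $E\cap\{|z|=s\}$ by the closed arc of $\{|z|=s\}$ centered at $s$ of the same angular length, producing a continuum $E^{*}\subset\Delta$ symmetric across the real axis. Since $E$ is connected and contains $0$ and $r$, its image under $z\mapsto|z|$ is a connected subset of $[0,1)$ containing $\{0,r\}$ and hence contains $[0,r]$; by construction $s\in E^{*}$ for every such $s$, so $E^{*}\supseteq[0,r]$. The classical symmetrization inequality for the conformal modulus of doubly connected plane domains (P\'olya--Szeg\H{o}; see~\cite{lehto-virtanen:quasiconformal}) gives $\mod(\Delta\setminus E)\leq\mod(\Delta\setminus E^{*})$, and the inclusion $\Delta\setminus E^{*}\subseteq\Delta\setminus[0,r]$ together with the monotonicity already invoked yields $\mod(\Delta\setminus E^{*})\leq\mu(r)$; chaining the three bounds completes the estimate.

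The substantive step is the symmetrization inequality, which I expect to be the main obstacle: the standard proof expresses the modulus via the Dirichlet integral of the harmonic function solving the appropriate mixed boundary value problem and invokes the rearrangement inequality for Dirichlet energies under circular symmetrization, but it is cleanest to quote this directly. For the characterization of equality, I would trace back each inequality: equality forces $A=\Delta\setminus E$, then $E=E^{*}$ (the direction of symmetrization being pinned by the location of $[0,r]$ on the positive real axis), and finally $E^{*}=[0,r]$, so that $A=\Delta\setminus[0,r]$, as claimed.
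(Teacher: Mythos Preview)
The paper does not prove this lemma; it is stated as a classical result attributed to Gr\"otzsch and invoked as a black box (for instance in the proof of Theorem~\ref{thm:criterion}). There is therefore no ``paper's own proof'' to compare against.

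Your sketch is the standard circular-symmetrization argument and is correct. Two small points worth tightening: when you write ``the bounded component of $\Delta\setminus A$'' you really mean the complementary continuum of $A$ in $\hat{\C}$ that does not contain $\infty$ (equivalently, the one not meeting $\pa\Delta$), since both pieces of $\Delta\setminus A$ sit inside the bounded set $\Delta$; and in the equality analysis the clean way to argue is to invoke the known equality case of the circular-symmetrization inequality, which says that $\Delta\setminus E$ must be a rotation of $\Delta\setminus E^{*}$, and then use $r\in E$ together with $E^{*}=[0,r]$ to rule out any nontrivial rotation.
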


\begin{theorem}\label{thm:criterion}
Let $F\subset\e\ol{\Delta}\subset\Delta$
be a compact Jordan domain
and $g:\ol{\Delta}\setminus F\rar \ol{\Delta}$
a $K$-quasiconformal map
which is a homeomorphism onto its image.
\begin{itemize}
\item[(a)]
There exists a $\tilde{K}$-quasiconformal homeomorphism
$\tilde{g}:\ol{\Delta}\rar\ol{\Delta}$
that coincides with $g$ on $\pa\Delta$.
Moreover, $\tilde{K}$ depends only on $K$ and $\e$,
and $\tilde{K}\rar 1$ as $K\rar 1$ and $\e\rar 0$.
\item[(b)]
If $0\in g(F)$, then
there exists a $\hat{K}$-quasiconformal homeomorphism
$\hat{g}:\ol{\Delta}\rar\ol{\Delta}$
that coincides with $g$ on $\pa\Delta$
and such that $\hat{g}(0)=0$. Moreover, $\hat{K}$
depends only on $K$ and $\e$, and
with $\hat{K}\rar 1$ as $K\rar 1$ and $\e\rar 0$.
\end{itemize}
\end{theorem}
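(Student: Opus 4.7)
For part (a), my plan is to reduce the construction of $\tilde g$ to a boundary-value problem. I would first extend $g$ arbitrarily to a homeomorphism $g_0 : \ol{\Delta} \to \ol{\Delta}$: since $g$ extends continuously to $\pa F$ (being quasiconformal) and maps the closed annulus $\ol{\Delta}\setminus F^\circ$ homeomorphically onto $\ol{\Delta}\setminus (F')^\circ$, where $F'$ is the bounded Jordan domain enclosed by the Jordan curve $g(\pa F)$, it suffices to fill in by any homeomorphism $F \to F'$. Applying Proposition~\ref{prop:distortion} to $g_0$ then yields that the boundary homeomorphism $g|_{\pa\Delta}$ distorts the modulus of every quadrilateral in $\ol{\Delta}$ with vertices on $\pa\Delta$ by at most the factor $K_\e$. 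The classical characterization of quasiconformal boundary values on the disc (a quadrilateral-modulus refinement of the Ahlfors--Beurling extension theorem) then produces a quasiconformal self-map $\tilde g$ of $\ol{\Delta}$ extending $g|_{\pa\Delta}$, whose distortion $\tilde K$ is a function of $K_\e$, hence of $K$ and $\e$, and tends to $1$ as $(K,\e) \to (1,0)$ since $K_\e \to K$ as $\e \to 0$.

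For part (b), I would post-compose $\tilde g$ with a quasiconformal self-map $\phi : \ol{\Delta} \to \ol{\Delta}$ that is the identity on $\pa\Delta$ and sends $a := \tilde g(0)$ to $0$, setting $\hat g := \phi \circ \tilde g$; this preserves the boundary values and gives $\hat g(0)=0$. A suitable $\phi$ can be built by a radial plug, with quasiconformal constant controlled by $|a|$ alone and tending to $1$ as $|a|\to 0$. To keep $|a|$ small I would exploit the freedom in constructing $\tilde g$ to arrange $\tilde g(0) \in F'$---for instance by choosing the filling $g_0|_F$ to send $0$ to $0$ and performing the boundary extension under the additional constraint $\tilde g(0) \in F'$---and then bound the diameter of $F'$ via the Gr\"otzsch lemma: since $g$ is $K$-quasiconformal on the annulus $\ol{\Delta}\setminus F$ of modulus at least $(2\pi)^{-1}\log(1/\e)$, the image annulus $\ol{\Delta}\setminus F'$ has modulus at least $(2\pi K)^{-1}\log(1/\e)$, which by Gr\"otzsch forces $F' \subset \mu^{-1}((2\pi K)^{-1}\log(1/\e))\cdot\ol{\Delta}$, a disc that shrinks to $\{0\}$ as $(K,\e)\to(1,0)$.

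The main technical obstacle is that the Ahlfors--Beurling-type extension in (a) is governed by boundary values alone, while (b) imposes a pointwise condition at an interior point. My plan circumvents this by separating the two constraints into a boundary match followed by a small interior correction, the smallness being driven by the Gr\"otzsch bound on the diameter of $F'$. Verifying that the boundary extension can be performed with the extra constraint $\tilde g(0) \in F'$ without spoiling the quadrilateral-modulus bound from Proposition~\ref{prop:distortion}, and that the radial plug then composes correctly with $\tilde g$ to give a single $\hat K$ with the required asymptotics, is the delicate part of the argument.
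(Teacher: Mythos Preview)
Your argument for part~(a) is essentially the paper's: extend $g$ to a self-homeomorphism of $\ol{\Delta}$, invoke Proposition~\ref{prop:distortion} to bound the cross-ratio distortion on $\pa\Delta$ by $K_\e$, and then take the Ahlfors--Beurling extension. (The paper extends via Lemma~\ref{lemma:extension} rather than an arbitrary topological filling, but since Proposition~\ref{prop:distortion} only requires a homeomorphism that is $K$-quasiconformal off $F$, your simpler filling is legitimate here.)

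Part~(b), however, has a genuine gap at exactly the point you flag as ``delicate''. The Ahlfors--Beurling extension $\tilde g$ is determined entirely by $g|_{\pa\Delta}$; the choice of the interior filling $g_0|_F$ has no effect on it, so ``choosing $g_0|_F$ to send $0$ to $0$'' does not help, and there is no mechanism in the construction for imposing the extra constraint $\tilde g(0)\in F'$. Without that, you have no bound on $|\tilde g(0)|$: a $\tilde K$-quasiconformal self-map of the disc with prescribed boundary values is determined, but nothing so far ties $\tilde g(0)$ to the small set $F'$. Your Gr\"otzsch bound on $\mathrm{diam}(F')$ is correct, but it only becomes useful once you know $\tilde g(0)$ and $0$ lie in a common small set.

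The paper resolves this by \emph{not} applying the Ahlfors--Beurling extension on the whole disc. Instead, it first uses the quasiconformal extension Lemma~\ref{lemma:extension} (here the quasiconformality, not just a topological filling, is essential) to make $g$ a global quasiconformal homeomorphism, and then applies part~(a) only on the subdisc $\sqrt[4]{\e}\,\ol{\Delta}$. The resulting $\tilde g$ therefore agrees with $g$ on the whole annulus $A(\sqrt[4]{\e})$, not merely on $\pa\Delta$, which forces $\tilde g(0)\in g(\sqrt[4]{\e}\,\Delta)$. Since $0\in g(F)\subset g(\sqrt[4]{\e}\,\Delta)$ as well, one can join $0$ to $\tilde g(0)$ by a path inside $g(\sqrt[4]{\e}\,\Delta)$ and bound $d=|\tilde g(0)|$ via Gr\"otzsch exactly as you propose. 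The final correction is then Lemma~\ref{lemma:translation}, which is the paper's explicit version of your ``radial plug''. So your overall strategy for~(b) is right in spirit; what is missing is the localization trick that confines $\tilde g(0)$ before the plug is applied.
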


Clearly, by shrinking $\Delta$,
one can even produce similar {\it quasiconformal
modifications} $\tilde{g}$ and $\hat{g}$ that
agree with $g$ on a neighbourhood of $\pa\Delta$ and such
that the above theorem holds (with slightly worse
$\tilde{K}$ and $\hat{K}$).

\begin{corollary}
If $F\subset\Delta$ is a compact Jordan domain with
$\mod(\Delta\setminus F)\geq M$ and $g$ is a $K$-quasiconformal
map as above, then there exist a modification $\tilde{g}$
and $\tilde{K}$ as in Theorem~\ref{thm:criterion},
with $\tilde{K}$ depending on $K$ and $M$ only.
Moreover, $\tilde{K}\rar 1$ as $M\rar\infty$ and $K\rar 1$.
The analogous conclusion holds for $\hat{g}$ and $\hat{K}$,
if $0\in g(F)$.
\end{corollary}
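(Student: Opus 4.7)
The plan is to reduce the corollary to Theorem~\ref{thm:criterion} by uniformizing the annular domain $\Delta\setminus F$ as a standard round annulus. Since $\mod(\Delta\setminus F)\geq M$, there is a conformal isomorphism $\psi\colon\Delta\setminus F\rar A(r)$ with $r=e^{-2\pi\,\mod(\Delta\setminus F)}\leq e^{-2\pi M}$; because $\pa\Delta$ and $\pa F$ are Jordan curves, $\psi$ extends by Carath\'eodory to a homeomorphism $\ol{\psi}\colon\ol{\Delta}\setminus F^\circ\rar\{r\leq|z|\leq 1\}$ sending $\pa\Delta$ onto $\{|z|=1\}$. Pulling $g$ back by the conformal map $\ol{\psi}^{-1}$ gives a $K$-quasiconformal homeomorphism $g^*:=g\circ\ol{\psi}^{-1}$ onto its image, whose ``hole'' $r\ol{\Delta}$ now lies inside $\e\ol{\Delta}$ with $\e=r\leq e^{-2\pi M}$. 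Theorem~\ref{thm:criterion}(a) applied to $(g^*,r\ol{\Delta})$ then produces $\ti{g}^*\colon\ol{\Delta}\rar\ol{\Delta}$, a $\ti{K}^*$-quasiconformal homeomorphism agreeing with $g^*$ on $\pa\Delta$, where $\ti{K}^*$ depends only on $K$ and $\e$ (hence only on $K$ and $M$), and $\ti{K}^*\rar 1$ as $K\rar 1$ and $M\rar\infty$.

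To push the construction back to the original disc, I would extend $\ol{\psi}$ to a quasiconformal self-homeomorphism $\wh{\psi}\colon\ol{\Delta}\rar\ol{\Delta}$ with $\wh{\psi}(F^\circ)=r\Delta$, and set $\ti{g}:=\ti{g}^*\circ\wh{\psi}$. Since $\wh{\psi}$ is conformal on $\ol{\Delta}\setminus F^\circ$, there $\ti{g}$ is $\ti{K}^*$-quasiconformal and, by construction, coincides with $g$ on $\pa\Delta$. Part~(b) is handled in the same way with Theorem~\ref{thm:criterion}(b) in place of~(a): the hypothesis $0\in g(F)$ transfers to $0\in g^*(r\ol{\Delta})$, producing $\wh{g}^*$ with $\wh{g}^*(0)=0$; one then chooses $\wh{\psi}$ so that $\wh{\psi}(0)=0$, exploiting the freedom in the extension when $0\in F^\circ$ or, in the case $0\in\ol{\Delta}\setminus F^\circ$, by combining with a M\"obius normalization of $g^*$ prior to invoking Theorem~\ref{thm:criterion}(b).

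The step I expect to be the main obstacle is choosing the extension $\wh{\psi}$ so that its dilatation on $F^\circ$ is controlled by a quantity depending on $M$ alone and tending to~$1$ as $M\rar\infty$; otherwise the total dilatation of $\ti{g}=\ti{g}^*\circ\wh{\psi}$, which equals $\ti{K}^*$ times the dilatation of $\wh{\psi}|_{F^\circ}$, will not approach $1$. After conformally uniformizing $F^\circ$ onto a round disc via its own Riemann map, this reduces to extending the boundary welding $\ol{\psi}|_{\pa F}$, viewed as a self-homeomorphism of a circle, to a low-dilatation quasiconformal self-map of the disc; I would accomplish this by a Beurling--Ahlfors-type extension, using that a conformally long annulus ($\mod\geq M$ with $M$ large) enforces asymptotic compatibility of its two boundary parametrizations, so that the welding is quasi-symmetric with constant tending to $1$ as $M\rar\infty$.
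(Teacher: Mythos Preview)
Your route is far more elaborate than what is needed, and the key step you flag as ``the main obstacle'' is in fact a genuine gap. The paper's proof is a single observation: for any $z\in F$ (after a harmless M\"obius normalization putting some point of $F$ at the origin), the annulus $\Delta\setminus F$ separates $\{0,z\}$ from $\pa\Delta$, so Gr\"otzsch's lemma gives $\mu(|z|)\geq\mod(\Delta\setminus F)\geq M$, hence $|z|\leq\mu^{-1}(M)$. Thus $F\subset\e\ol{\Delta}$ with $\e=\mu^{-1}(M)$, and Theorem~\ref{thm:criterion} applies directly to the original $g$ and the original $F$. No uniformization of $\Delta\setminus F$, no extension $\wh{\psi}$, and no welding argument are needed.

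Your proposed fix for the obstacle is not correct. You claim that a conformally long annulus forces the welding between the inner Riemann map of $F^\circ$ and the restriction $\psi|_{\pa F}$ to be quasi-symmetric with constant tending to~$1$ as $M\rar\infty$. This fails already for smooth $F$: take $F$ to be a small ellipse of fixed eccentricity $a/b$. As the ellipse shrinks, $\mod(\Delta\setminus F)\rar\infty$, but near $\pa F$ both the interior Riemann map and $\psi$ are determined (up to scaling) by the \emph{shape} of $F$, so the welding converges to a fixed homeomorphism whose quasi-symmetry constant depends on $a/b$ and does not tend to~$1$. For $F$ with a non-quasicircle boundary the welding need not be quasi-symmetric at all, so no quasiconformal extension $\wh{\psi}$ agreeing with $\psi$ on $\ol{\Delta}\setminus F^\circ$ exists. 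Consequently the dilatation of $\ti g=\ti g^*\circ\wh{\psi}$ on $F^\circ$ cannot be bounded in terms of $K$ and $M$ alone. Any attempt to salvage the argument by requiring $\wh{\psi}$ to match $\psi$ only on $\pa\Delta$ ultimately relies on knowing that $\psi$ is conformal on a round sub-annulus $\{\e<|z|<1\}$ of definite modulus, i.e.\ on the inclusion $F\subset\e\ol{\Delta}$ --- which is exactly the paper's one-line argument, rendering the detour through $\psi$ superfluous.
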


This is an easy consequence. In fact, for every $z\in F$,
Gr\"otzsch's lemma gives $\mu(|z|)\geq \mod(\Delta\setminus F)$
and so $|z|\leq \mu^{-1}(M)$. Thus, $F\subset\e\ol{\Delta}$
with $\e=\mu^{-1}(M)$ and Theorem~\ref{thm:criterion} applies.

\begin{lemma}\label{lemma:translation}
Let $h:\ol{\Delta}\rar\ol{\Delta}$ be a $K$-quasiconformal
homeomorphism and let $d=|h(0)|$. Then, there exists
a $K(1+d)/(1-d)$-quasiconformal homeomorphism
$\hat{h}:\ol{\Delta}\rar\ol{\Delta}$
that agrees with $h$ on $\pa\Delta$ and such that
$\hat{h}(0)=0$.
\end{lemma}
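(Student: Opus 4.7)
Set $p:=h(0)$ and $d:=|p|$. The plan is to post-compose $h$ with a quasiconformal self-homeomorphism $\phi$ of $\ol{\Delta}$ satisfying $\phi|_{\pa\Delta}=\mathrm{id}$ and $\phi(p)=0$; then $\hat h:=\phi\circ h$ automatically agrees with $h$ on $\pa\Delta$, sends $0$ to $0$, and has dilatation at most $K\cdot K(\phi)$. The task therefore reduces to constructing $\phi$ with $K(\phi)\le(1+d)/(1-d)$.

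To build $\phi$, I would exploit the conformal Möbius automorphism $T(z):=(z-p)/(1-\bar p z)$ of $\Delta$, which already sends $p$ to $0$ conformally but acts non-trivially on $\pa\Delta$. The idea is to ``undo'' its boundary action via the simplest possible interpolating self-homeomorphism, the \emph{radial twist}
\[
\psi(re^{i\theta}) := r\cdot T^{-1}(e^{i\theta}),\qquad \psi(0):=0,
\]
and to set $\phi:=\psi\circ T$. The required properties of $\phi$ then follow immediately: for $z\in\pa\Delta$ one has $T(z)\in\pa\Delta$, whence $\phi(z)=T^{-1}(T(z))=z$; and $\phi(p)=\psi(T(p))=\psi(0)=0$.

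Since $T$ is conformal, $K(\phi)=K(\psi)$, so the heart of the argument is the dilatation bound for $\psi$. Writing $T^{-1}(e^{i\theta})=e^{i\varphi(\theta)}$, the map $\psi$ is $(r,\theta)\mapsto(r,\varphi(\theta))$ in polar coordinates; a direct computation of $\pa_z\psi$ and $\pa_{\bar z}\psi$ gives
\[
\mu_\psi(re^{i\theta})=e^{2i\theta}\,\frac{1-\varphi'(\theta)}{1+\varphi'(\theta)},
\]
so the pointwise dilatation of $\psi$ equals $\max(\varphi'(\theta),\,\varphi'(\theta)^{-1})$. Since $\varphi'(\theta)=|(T^{-1})'(e^{i\theta})|=(1-d^2)/|1+\bar p\,e^{i\theta}|^2$ sweeps through the interval $[(1-d)/(1+d),\,(1+d)/(1-d)]$ as $\theta$ varies, one concludes $K(\psi)=(1+d)/(1-d)$, whence $K(\hat h)\le K(1+d)/(1-d)$, as desired.

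The only delicate step is the Beltrami-coefficient computation for the radial twist together with the sharp boundary derivative bound $\max_{|w|=1}|(T^{-1})'(w)|=(1+d)/(1-d)$; these explain why the hyperbolic distance $\tfrac{1}{2}\log\frac{1+d}{1-d}$ between $0$ and $p$ controls the inflation factor of $K$.
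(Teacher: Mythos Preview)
Your argument is correct. Both proofs post-compose $h$ with a $(1+d)/(1-d)$-quasiconformal self-map of $\ol{\Delta}$ that fixes $\pa\Delta$ pointwise and carries $h(0)$ to $0$, but the constructions of that correcting map are genuinely different. The paper rotates so that $h(0)=d\in\R$, transports everything to the upper half-plane via the Cayley transform $T(z)=i(1-z)/(1+z)$, and then simply applies the affine vertical stretch $(x,y)\mapsto(x,\lambda y)$ with $\lambda=(1+d)/(1-d)$; this map is visibly $\lambda$-quasiconformal, fixes $\pa\H$, and moves $T(d)=i/\lambda$ to $i$, so no Beltrami computation is needed at all. Your route stays in the disk and uses the radial twist $\psi$ to undo the boundary action of the disk automorphism $T$; this is equally classical and gives exactly the same bound, at the cost of the short computation of $\mu_\psi$ and of the boundary derivative of $T^{-1}$. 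One minor point you might make explicit is that $\psi$ is a priori defined only on $\ol{\Delta}\setminus\{0\}$, but since $\|\mu_\psi\|_\infty=d<1$ the isolated singularity at $0$ is removable for quasiconformal maps, so $\psi$ (and hence $\phi$) is quasiconformal on all of $\ol{\Delta}$.
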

\begin{proof}
Up to rotations, we can assume that $h(0)=d$.
The conformal map $T:\ol{\Delta}\rar\ol{\mathbb{H}}$
given by $z\mapsto i(1-z)/(1+z)$ sends
$T(0)=i$ and $T(d)=i(1-d)/(1+d)$.
The homeomorphism $(u+iv)=T\circ h\circ T^{-1}
:\ol{\mathbb{H}}\rar\ol{\mathbb{H}}$ sends $(u+iv)(i)=
i(1-d)/(1+d)$. The map
$\dis \left(u+iv(1+d)/(1-d)\right):\ol{\H}\rar\ol{\H}$ is
a $(1+d)/(1-d)$-quasiconformal
homeomorphism, it agrees with $(u+iv)$ on
$\pa\H$ and it sends $i$ to $i$.
Thus, $\dis \hat{h}:=T^{-1}\circ 
\left(u+iv(1+d)/(1-d)\right) \circ T$
has the required properties.
\end{proof}

The lemma below is a reformulation of
Theorem II.6.4 in \cite{lehto-virtanen:quasiconformal}
(see also \cite{vaisala:tienari}).

\begin{lemma}\label{lemma:extension}
There exist constants $\beta_0\geq 1$
and $\beta_1>0$ such that,
given a $K$-quasiconformal
homeomorphism $h:A(r)\rar\Delta$ onto its image,
there exists another $KC$-quasiconformal
homeomorphism $\tilde{h}:\ol{\Delta}\rar\ol{\Delta}$
that coincides with $h$ on $\ol{\Delta}$,
with $C=\beta_0+\beta_1 (1-r)^{-2}>1$.
\end{lemma}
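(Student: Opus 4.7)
The plan is to extend $h$ across the inner boundary circle $\{|z|=r\}$ by filling in the closed disc $r\ol{\Delta}$ with a suitable quasiconformal map, reducing the construction to Theorem~II.6.4 of \cite{lehto-virtanen:quasiconformal} (cf.\ also \cite{vaisala:tienari}). First I would use the standard boundary behaviour of quasiconformal maps onto their images to extend $h$ continuously to $\ol{A(r)}$. The image $\gamma:=h(\{|z|=r\})$ is then a Jordan curve lying inside $\Delta$ and bounding a Jordan domain $D\subset\Delta$. Any extension $\tilde{h}$ that agrees with $h$ on $\ol{A(r)}$ must map $r\ol{\Delta}$ homeomorphically onto $\ol{D}$, so the problem reduces to finding a quasiconformal homeomorphism $r\ol{\Delta}\rar\ol{D}$ realising the prescribed boundary correspondence on $\{|z|=r\}$.

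Next I would uniformise the hole: by the Riemann mapping theorem together with Carath\'eodory's extension, pick a conformal map $\psi:r\ol{\Delta}\rar\ol{D}$, so the composition $\phi:=\psi^{-1}\circ h|_{\{|z|=r\}}$ is a self-homeomorphism of the circle $\{|z|=r\}$. Since $\psi$ is conformal and contributes no distortion, it suffices to find a quasiconformal self-homeomorphism of $r\ol{\Delta}$ that extends $\phi$. Because $h$ is $K$-quasiconformal on the annulus $A(r)$ (which has modulus $-\log r/(2\pi)$), the boundary map $\phi$ is quasisymmetric, with a quasisymmetric constant controlled by $K$ and degenerating as $r\rar 1$. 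At this point I would invoke Theorem~II.6.4 of \cite{lehto-virtanen:quasiconformal} to extend $\phi$ to a $C$-quasiconformal self-homeomorphism of $r\ol{\Delta}$, together with the explicit estimate it provides: there exist universal constants $\beta_0\geq 1$ and $\beta_1>0$ with $C\leq\beta_0+\beta_1(1-r)^{-2}$.

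Finally, gluing $h$ on $\ol{A(r)}$ with $\psi$ post-composed with the extension on $r\ol{\Delta}$ gives the desired homeomorphism $\tilde{h}:\ol{\Delta}\rar\ol{\Delta}$; the two pieces agree on the common interface $\{|z|=r\}$ by construction, so $\tilde{h}$ is quasiconformal with distortion $K$ on $A(r)$ and at most $C$ on $r\Delta$, hence at most $KC$ overall. I expect the main obstacle to be the middle paragraph: quantifying precisely how the quasisymmetric constant of $\phi$ depends on $K$ and $r$, and matching it with the estimate from \cite{lehto-virtanen:quasiconformal} so as to obtain the specific blow-up $(1-r)^{-2}$ as the annulus degenerates. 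The remaining steps are standard applications of the Riemann mapping theorem, Carath\'eodory's boundary correspondence, and the gluing principle for quasiconformal maps along common boundary arcs.
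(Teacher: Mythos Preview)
The paper does not actually prove this lemma: immediately before the statement it says ``The lemma below is a reformulation of Theorem~II.6.4 in \cite{lehto-virtanen:quasiconformal} (see also \cite{vaisala:tienari})'', and no argument follows. So there is no paper-proof to compare against; your proposal is a reasonable sketch of how one would unpack that citation, via Carath\'eodory extension, Riemann uniformisation of the hole, and the Beurling--Ahlfors type extension of the induced circle homeomorphism.

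Two small points of imprecision are worth flagging. First, in your last paragraph you say $\tilde h$ has distortion $K$ on $A(r)$ and at most $C$ on $r\Delta$, ``hence at most $KC$ overall''. Gluing two quasiconformal pieces along a common boundary gives distortion $\max(K,C)$, not the product; the form $KC$ in the lemma (with $C$ depending only on $r$) comes instead from the fact that the extension constant on the inner disc already factors as $K$ times something depending only on $r$. Second, and relatedly, the quasisymmetric constant of $\phi=\psi^{-1}\circ h|_{\{|z|=r\}}$ depends on both $K$ and $r$, so the output of the Beurling--Ahlfors extension is a priori $C(K,r)$-quasiconformal, not $C(r)$-quasiconformal. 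To get the stated shape $C=\beta_0+\beta_1(1-r)^{-2}$ with $\beta_0,\beta_1$ universal, you need to check that the quasisymmetry bound for $\phi$ is of the form $\lambda(K)\cdot(\text{function of }r)$ and then feed this through the explicit constant in Theorem~II.6.4. You already identify this bookkeeping as the main obstacle; it is indeed where all the content lies, and the rest of your outline is routine.
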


\begin{proof}[Proof of Theorem~\ref{thm:criterion}]
Using Riemann's conformal map theorem,
we can assume that $\pa\Delta$ is contained in the image of $g$.

Applying Lemma~\ref{lemma:extension}
to the restriction of $g$ to $\sqrt{\e}\Delta$,
we can assume that $g$ extends to a quasiconformal
homeomorphism $\ol{\Delta}\rar\ol{\Delta}$, whose
restriction to $A(\sqrt{\e})$ is $K$-quasiconformal.

Claim (a) immediately follows considering Ahlfors-Beurling's
extension $\tilde{g}$ of $\dis g\Big|_{\pa\Delta}$
\cite{ahlfors-beurling:extension} and using
the estimate obtained in Proposition~\ref{prop:distortion}.
We obtain $\tilde{K}=\l(K_\e)^2$.

For (b), we proceed as follows.
As before, using Lemma~\ref{lemma:extension}, we modify $g$
so that it becomes a homeomorphism of the unit disc to itself,
which is $K$-quasiconformal on $A(\sqrt{\e})$
and $KC$-quasiconformal on $\sqrt{\e}\Delta$,
with $C=\beta_0+\beta_1(1-\sqrt{\e})^{-2}$.

Now, we apply part (a) to the restriction of $g$ to $\sqrt[4]{\e}\ol{\Delta}$
and so we obtain a $\tilde{g}$ which agrees with $g$ on $A(\sqrt[4]{\e})$
(where is $K$-quasiconformal) and which is $\tilde{K}'$-quasiconformal
on $\sqrt[4]{\e}\Delta$, where $\tilde{K}'=\l(K'_\e)^2$
and
\[
K'_\e=K\left(1+\frac{\beta_0-1+\beta_1(1-\sqrt[8]{\e})^{-2}}
{1+\frac{1}{8}\log(1/\e)} \right)
\]

Consider a path $\g\subset g(\sqrt[4]{\e}\Delta)$ that joins
$0$ with $\tilde{g}(0)$. By Gr\"otzsch's theorem, 
\begin{align*}
\mu(d) & \geq\mod(\Delta\setminus\g)\geq
\mod(\Delta\setminus g(\sqrt[4]{\e}\Delta))\geq \\
& \geq 
\frac{1}{K}\mod(A(\sqrt[4]{\e}))=\frac{1}{8\pi K}\log(1/\e)
\end{align*}
where $d=|\tilde{g}(0)|$. The inequality
\[
d\leq \mu^{-1} \left( \frac{1}{8\pi K}\log(1/\e) \right)
\]
implies that $d\rar 0$ if $\e\rar 0$ while $K$ stays bounded.

Applying Lemma~\ref{lemma:translation} to $\tilde{g}$,
we obtain a $\hat{K}$-quasiconformal
homeomorphism $\hat{g}:\ol{\Delta}\rar\ol{\Delta}$
that fixes $0$ and which agrees with $g$ on $\pa\Delta$,
where
\[
\hat{K}=\tilde{K}'\left(\frac{1+d}{1-d}\right)
\leq \l
\left[
K\left(1+\frac{\beta_0-1+\beta_1(1-\sqrt[8]{\e})^{-2}}
{1+\frac{1}{8}\log(1/\e)} \right)
\right]^{2}
\left(
\frac{1+ \mu^{-1} \left( \frac{1}{8\pi K}\log(1/\e) \right)}
{1-\mu^{-1} \left( \frac{1}{8\pi K}\log(1/\e) \right) }
\right)
\]
Notice that $\hat{K}$ depends only on $K$ and $\e$ and that,
if $K\rar 1$ and $\e\rar 0$, then $\hat{K}\rar 1$.
\end{proof}

\begin{proof}[Proof of Proposition~\ref{prop:distortion}]
Clearly, it is sufficient to find $K_\e$ such that $M'\leq K_\e M$
holds.
We can assume that $F=\e\ol{\Delta}$
and call $D=\sqrt{\e}\Delta$,
so that $\mod(\Delta\setminus D)=
\frac{1}{4\pi}\log(1/\e)$.
Applying Lemma~\ref{lemma:extension} to
the restriction of $\dis g\Big|_D$,
we can assume that $g$ is $KC$-quasiconformal
on $D$, with $C=\beta_0+\beta_1(1-\sqrt{\e})^{-2}$.

Consider the canonical biholomorphisms $f: Q\rar R=R(0,M,M+i,i)$
and $f:Q'\rar R'=R(0,M',M'+i,i)$ and call $\tilde{g}:R\rar R'$
the composition $\tilde{g}:=f'\circ g\circ f^{-1}$.


Notice that, as $\pa f(D)$ is a smooth real-analytic arc, it can
be subdivided into a finite number of arcs, each intersecting
any horizontal line of $R$ at most once.

Thus, chosen a small $\delta>0$, we can divide $R$
into rectangles $R=\bigcup_{h,k=1}^m R_{h,k}$, where
$R_{h,k}=R(x_{k-1}+iy_{h-1},x_k+iy_{h-1},x_k+iy_h,x_{k-1}+iy_h)$ with
$0=y_0<y_1<\dots<y_m=1$ and $0=x_0<x_1<\dots<x_m=1$ such that
\begin{enumerate}
\item
$R_{h}\cap\pa f(D)$ is a finite union of arcs joining the horizontal
sides of $R_{h}$, where $R_h=\bigcup_{k=1}^m R_{h,k}$
\item
$\dis \Area(U)-\delta\leq \Area(D)\leq \Area(U)$,
where $U=U_1\cup\dots\cup U_m$,
$U_h=\bigcup_{k\in I_h} R_{h,k}$
and $I_h=\{k\,|\, R_{h,k}\cap f(D)\neq\emptyset\}$.
\item
For every $h=1,\dots,m$ the sum of the diameters of
the connected components of $\tilde{g}(R_h\cap f(\pa D))$
is smaller than $\d$.
\end{enumerate}

Let $R'_{h,k}=\tilde{g}(R_{h,k})$ and $R'_h=\tilde{g}(R_h)$
and call $s_{h,k}$ the distance between the $b$-sides of $R'_{h,k}$.
By Rengel's inequality (see Section~I.4.3 of
\cite{lehto-virtanen:quasiconformal})
\[
\mod(R'_{h,k})\geq \frac{s_{h,k}^2}{\Area(R'_{h,k})}
\]
Thus, we obtain
\begin{align*}
K\mod(R_h)+K(C-1)\sum_{k\in I_h}\mod(R_{h,k}) & =
\sum_{k\notin I_h}K\mod(R_h)+\sum_{k\in I_h}KC\mod(R_{h,k})
\geq \\
& \geq \sum_{k=1}^m
\mod(R'_{h,k})\geq
\sum_{k=1}^m \frac{s_{h,k}^2}{\Area(R'_{h,k})}
\end{align*}
By Schwarz's inequality
\[
\sum_{k=1}^m \frac{s^2_{h,k}}{\Area(R'_{h,k})}
\geq
\frac{\left(\sum_{k=1}^m s_{h,k}\right)^2}
{\sum_{k=1}^m \Area(R'_{h,k})}
\geq\frac{(M'-\d)^2}{\Area(R'_h)}
\]
As $R_{h,k}$ are rectangles,
$\dis \mod(R_{h,k})=\frac{\Area(R_{h,k})}{(y_h-y_{h-1})^2}$,
and so
\[
\frac{K\Area(R_h)+K(C-1)\Area(U_h)}
{(y_h-y_{h-1})^2}\geq \frac{(M'-\d)^2}{\Area(R'_h)}
\]
The left-hand side can be rewritten as
\[
\frac{KM}{(y_h-y_{h-1})}
\left(1+\frac{C-1}{M(y_h-y_{h-1})}\Area(U_h)\right)
\]
Taking inverses
\[
\frac{(y_h-y_{h-1})}{KM}
\left(1+\frac{C-1}{M(y_h-y_{h-1})}\Area(U_h)\right)^{-1}
\leq \frac{\Area(R'_h)}{(M'-\d)^2}
\]
Summing over $h=1,\dots,m$ and using 
the convexity of the function $x\mapsto 1/x$ (for $x>0$), we get
\[
\frac{1}{KM}\left(1+\frac{C-1}{M}(\Area(D)+\d) \right)^{-1}\leq
\frac{M'}{(M'-\d)^2}
\]
As $\d>0$ was arbitrary, we conclude
\[
M'\leq MK\left(1+(C-1)\frac{\Area(D)}{\Area(R)}\right)
\leq M K\left(1+\frac{C-1}{1+\log(1/\e)} \right)
\]
where the last inequality is obtained
applying Lemma~\ref{lemma:isop} to $f(D)\subset R$.
\end{proof}
\end{section}
%
%
%
%
\begin{section}{The collar lemma}\label{sec:collar}
Let $\g$ be a simple closed geodesic in a hyperbolic surface $\Si$
and let $\ell$ be its length. Call $\ell'$ the length of
a hypercycle $\g'$, whose distance from $\ell$ is $d$,
and let $R$ the annulus enclosed by $\g$ and $\g'$.

\begin{lemma}
$\ell'=\ell\cosh(d)$ and $\mathrm{Area}(R)=\ell\sinh(d)$.
So $\dis \ell'=\sqrt{\ell^2+\mathrm{Area}(R)^2}$.
\end{lemma}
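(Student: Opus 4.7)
The plan is to introduce Fermi coordinates $(s,t)$ adapted to $\g$: here $s\in\R/\ell\Z$ is the arc length along $\g$ and $t$ is the signed hyperbolic distance from $\g$, so that the hypercycle $\g'$ corresponds to $\{t=d\}$ and the annulus $R$ to $\{0\le t\le d\}$. To identify the metric in these coordinates I would lift to the universal cover, where $\g$ unrolls to a complete geodesic $\ti{\g}\subset\H$ on which the deck group acts as a hyperbolic translation of translation length $\ell$. Choosing $\ti{\g}$ to be the imaginary axis in the upper half-plane, the map $(s,t)\mapsto(e^s\tanh(t),\,e^s/\cosh(t))$ realizes Fermi coordinates, and a routine calculation with the Poincar\'e metric $(dx^2+dy^2)/y^2$ shows that the pullback is
\[
\cosh^2(t)\,ds^2+dt^2.
\]

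Once the metric has this form, both formulas are immediate integrations. The length of $\g'=\{t=d\}$ is
\[
\ell'=\int_0^\ell\cosh(d)\,ds=\ell\cosh(d),
\]
while the area of $R$ is
\[
\Area(R)=\int_0^\ell\!\!\int_0^d\cosh(t)\,dt\,ds=\ell\sinh(d).
\]
The claimed identity $\ell'=\sqrt{\ell^2+\Area(R)^2}$ then follows by squaring and using $\cosh^2(d)=1+\sinh^2(d)$.

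The only subtlety worth mentioning is whether these integrals really compute the length and area on $\Si$, since for large $d$ the strip $\{|t|\le d\}$ in the universal cover need not inject into $\Si$. This causes no difficulty: both $\ell'$ and $\Area(R)$ are intrinsically defined on the quotient cylinder $(\R/\ell\Z)\times[0,d]$ endowed with the metric displayed above, which is exactly the annular cover of $\Si$ associated with $\g$, so the two formulas hold for every $d>0$. The only real work is therefore the metric computation, and that is the (very mild) main obstacle.
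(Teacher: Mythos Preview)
Your proof is correct and is exactly the standard computation the paper has in mind: the paper's own proof consists of the single sentence ``The proof is simple computation,'' so you have simply written out that computation in full via Fermi coordinates, which is the natural way to do it. Your remark about working on the annular cover rather than in $\Si$ itself is a nice clarification but not strictly needed here, since the lemma is a local statement about the metric on a collar.
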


The proof is simple computation.

\begin{lemma}[Collar lemma, \cite{keen:collar}-\cite{matelski:collar}]
For every simple closed geodesic $\g\subset\Si$
in a hyperbolic surface and for every ``side'' of $\g$,
there exists an embedded hypercycle $\g'$ parallel
to $\g$ (on the prescribed side) such that
the area of the annulus $A_1(\g)$ enclosed by $\g$ and $\g'$
is $\ell/2\sinh(\ell/2)$.
For $\ell=0$, the geodesic $\g$ must be intended
to be a cusp and $\g'$ a horocycle of length $1$.
Furthermore, all such annuli (corresponding
to distinct geodesics and sides) are disjoint.
\end{lemma}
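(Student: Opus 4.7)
The plan is to reduce the statement directly to the classical collar lemma of Keen and Matelski cited above, which asserts that around any simple closed geodesic $\g$ of length $\ell>0$ there is an embedded tubular neighbourhood of half-width $w(\ell)$ satisfying $\sinh w(\ell)\cdot\sinh(\ell/2)=1$, and that such neighbourhoods for distinct simple closed geodesics are pairwise disjoint. Analogously, for $\ell=0$ the standard cusp neighbourhood bounded by a horocycle of length $1$ is embedded and disjoint from the analogous regions around other cusps, which is Shimizu's lemma.

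First I would compute the width $d$ demanded by the present statement. By the preceding lemma, the annulus between $\g$ and a hypercycle at signed distance $d$ on a prescribed side has area $\ell\sinh d$, so the requirement $\Area(A_1(\g))=\ell/(2\sinh(\ell/2))$ forces
\[
\sinh d = \frac{1}{2\sinh(\ell/2)} = \tfrac{1}{2}\sinh w(\ell)\,,
\]
and monotonicity of $\sinh$ yields $d<w(\ell)$. Hence $A_1(\g)$ is strictly contained in one side of the Keen--Matelski collar, so its projection to $\Si$ is injective, and it is disjoint from $A_1(\g')$ for every other simple closed geodesic $\g'$. The disjointness of the two $A_1$'s on opposite sides of the same $\g$ is automatic, since they lie in distinct components of $\Si\setminus\g$.

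For the degenerate case $\ell=0$, I would first note that $\ell/(2\sinh(\ell/2))\rar 1$ as $\ell\rar 0$, which is compatible with a cusp neighbourhood of area $1$. After conjugating in the upper half-plane so that the parabolic element fixing the cusp is $z\mapsto z+1$, the horocycle of length $1$ is the line $\Ii z=1$, and Shimizu's lemma gives that $\{\Ii z\ge 1\}/\la z\mapsto z+1\ra$ embeds in $\Si$ with area exactly $1$, is disjoint from the standard neighbourhoods of the other cusps, and is also disjoint from the hyperbolic collars constructed above (its lift in $\H$ meets no lift of any Keen--Matelski collar, again by the discreteness input encoded in Shimizu's lemma).

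Since all the non-trivial content is already encapsulated in the Keen--Matelski and Shimizu results, the only step requiring new work here is the bookkeeping above. The main obstacle is therefore conceptually minor: one must check that the mixed cases (hyperbolic collar versus cusp neighbourhood, or two cusp neighbourhoods) are covered, which they are by the same universal-cover argument. The numerical factor $1/2$ in the prescribed area is precisely what guarantees that each $A_1(\g)$ sits strictly inside the corresponding classical embedded region, leaving room for all the required disjointness assertions.
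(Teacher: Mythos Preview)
Your reduction is correct and matches the paper's treatment: the paper does not supply a proof at all but simply cites Keen and Matelski, so the only content beyond the citation is exactly the bookkeeping you carry out (translating the area prescription $\ell/(2\sinh(\ell/2))$ into a width $d$ via the preceding lemma and observing $\sinh d=\tfrac12\sinh w(\ell)<\sinh w(\ell)$). One tiny quibble: the two one-sided annuli on opposite sides of the same $\gamma$ share $\gamma$ itself as a common boundary, so ``disjoint'' here should be read as having disjoint interiors; otherwise nothing needs adjusting.
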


One can easily show that the length $\ell'$
of the hypercycle $\g'$ provided by the collar
lemma always satisfies $\ell'\geq 1$.

Motivated by the collar lemma, we call
$A_t(\ell)$ to be a closed hyperbolic annulus
of area $t\ell/2\sinh(\ell/2)$,
bounded by a geodesic of length $\ell$ and a hypercycle.
As usual, by $A_t(0)$ we will mean a closed
horoball of area $t$ (which does include the cusp).

Concretely, given a closed geodesic $\g$ in a hyperbolic surface
$\Si$, whenever a ``side'' of $\g$ is understood,
we will denote by $A_t(\g)\subset\Si$ the annulus
isometric to $A_t(\ell_\g)$ bounded by $\g$.
If $\g$ is not a boundary geodesic, we will denote by
$dA_t(\g)$ the collar of $\g$ obtained as a union
of both $A_t(\g)$'s.

Consider now a closed hyperbolic
hexagon $H=H(h_1,h_2,h_3)$ with six right angles
and (cyclically ordered) sides $(\a_1,\b_3,\a_2,\b_1,\a_3,\b_2)$,
where $\a_i$ has length $h_i$.
Notice that $H$ is compact, that is it contains possible points at infinity.

The double $dH$ of $H$ along the $\b$-sides is a pair of pants
with boundary lengths $\ell_{d\a_i}=2h_i$.
The collar $A_1(\a_i)\subset H$ is then the restriction of
$A_1(d\a_i)\subset dH$ to $H$.
\begin{center}
\begin{figurehere}
\psfrag{a3i}{$\a'''_i$}
\psfrag{xii}{$\xi_i$}
\psfrag{xi}{$x_i$}
\psfrag{yi}{$y_i$}
\psfrag{ai}{$\a_i$}
\psfrag{a2i}{$\a''_i$}
\psfrag{Hco}{$H^{conv}$}
\includegraphics[width=0.5\textwidth]{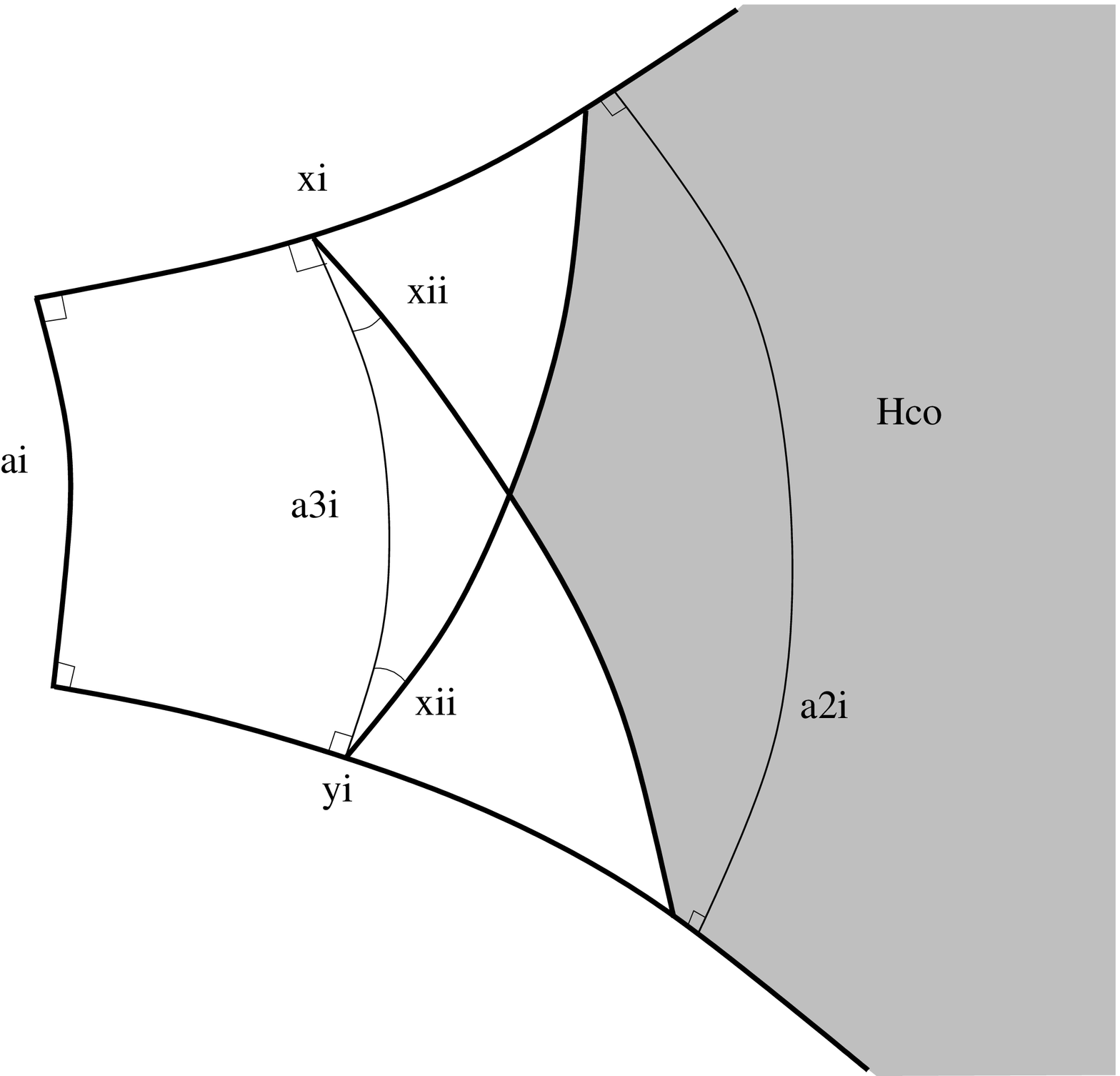}
\caption{$\a''_i$ and $\a'''_i$ bound $A_{\frac{1}{2}}(\a_i)$
and $A_{t(h_i)}(\a_i)$ respectively.}
\label{fig:hexagon-collar} 
\end{figurehere}  
\end{center}
Given $\xi>0$ and $0<t<1$,
denote by $\hat{A}_{t,\xi}(h)\subset A_1(h)$
the union of $A_t(h)$ and all geodesics that hit
$\pa A_t(h)$ with an angle smaller than $\xi$
(or, equivalently, bigger than $\pi-\xi$).
As usual, we denote by $\hat{A}_{t,\xi}(d\a_i)$
the subset of $A_1(d\a_i)$ isometric to $\hat{A}_{t,\xi}(2h_i)$.
Notice that $\hat{A}_{t,\xi}(\a_i):=\hat{A}_{t,\xi}(d\a_i)\cap H$
is delimited by the two geodesics that hit
$x_i$ and $y_i$ (that is, the extremal points of the horocycle
$\a'''_i$ that bounds $A_{t(h_i)}(\a_i)$) with an angle $\xi$
(see Figure~\ref{fig:hexagon-collar}).

Clearly, there exist smooth
decreasing functions $t,\xi:[0,\infty)\rar\R_+$
such that $\hat{A}_{t(h),\xi(h)}(h)\subset A_{\frac{1}{2}}(h)$
for all $h\in[0,\infty)$. We can also assume that $t(0)=1/4$
and $\xi(0)=\pi/4$.

Hence, $H^\circ:=H\setminus
\bigcup_i A_{t(h_i)}(\a_i)$ contains the nonempty
convex subset
$H^{conv}:=H\setminus\bigcup_i \hat{A}_{t(h_i),\xi(h_i)}(\a_i)$.
Notice that the diameter of $H^\circ$
is bounded above in terms of $h_1+h_2+h_3$.
As a consequence, every geodesic that joins the {\it baricenter}
$B$ of $H^{conv}$ to $\pa H^\circ$
forms an angle which is bounded from below
(and this angle may go to zero only if $h_1+h_2+h_3$ diverges).
\end{section}
%
%
\begin{section}{Standard maps}\label{sec:standard}
Let $\varphi:\R\rar\R$ be a smooth nonnegative function with compact support
inside $(0,1/2)$ and with unit integral and let
$\dis \Phi(s)=\int_0^s \varphi(x)dx$.

Let $\ell,\tilde{\ell}\geq 0$ and assume that
$\ell=0$ only if $\tilde{\ell}=0$.
Then, for every $0<t\leq 1$ and $\vartheta\in\R$, the
{\it standard map of annuli}
$\sigma_a(\vartheta):A_t(\ell)\rar A_t(\tilde{\ell})$ is defined as follows.

The longest hypercycle of $A_t(\ell)$
has length $\dis \ell'''=\ell\sqrt{1+\frac{t^2}{4\sinh(\ell)^2}}$
and sits at distance $d'''$ from the geodesic
(similarly, for $A_t(\tilde{\ell})$).

For every $0\leq s\leq 1$,
$\sigma_a(\vartheta)$ maps
the hypercycle of length $s\ell+(1-s)\ell'''$ to
the hypercycle of length $s\tilde{\ell}+(1-s)\tilde{\ell}'''$
by a homothety (the scaling factor will clearly depend on $s$).
Moreover, the hypercycle that is mapped
to the one of length $\tilde{\ell} \cosh(\tilde{d})$
(which is at distance $\tilde{d}$ from the closed geodesic
of $A_t(\tilde{\ell})$)
is twisted\footnote{In our conventions, the twist is positive
if a pedestrian that walks across the crack is pushed
to his {\it right}.}
by an angle $\vartheta\Phi(\tilde{d}/\tilde{d}''')$.

Clearly, if $\tilde{\ell}=0$, then the cusp of $A_{t}(\tilde{\ell})$
is at infinite distance and so the actual value of $\vartheta$
is ineffective.\\

%
%
Consider hyperbolic hexagons $H=H(h_1,h_2,h_3)$ and
$\tilde{H}=H(\tilde{h}_1,\tilde{h}_2,\tilde{h}_3)$
as defined in the previous section and
call $B\in H$ (resp. $\tilde{B}\in \tilde{H}$) the baricenter of
$H^{conv}$ (resp. of $\tilde{H}^{conv}$).

The standard map
$\sigma^\circ:H^\circ\rar \tilde{H}^\circ$
is defined as follows.

We prescribe $\sigma^\circ(B):=\tilde{B}$ and
we ask $\sigma^\circ$ to map the vertices of $H^\circ$ to
the corresponding
vertices of $\tilde{H}^\circ$, $\a''_i$ to $\tilde{\a}''_i$ and
$\b_j\cap H^\circ$ to $\tilde{\b}_j\cap\tilde{H}^\circ$
by homotheties.
Finally, for every $p\in\pa H^\circ$, we require
$\sigma^\circ$ to map the geodesic segment $\os{\frown}{Bp}$
homothetically onto $\os{\frown}{\tilde{B}\sigma^\circ}(p)$.

Clearly, $\sigma^\circ$ is a homeomorphism.
Moreover, $\sigma^\circ$ is quasiconformal
and it is differentiable everywhere except
(possibly) along the six geodesic segments
that join $B$ to the vertices of $H^\circ$.\\

Now, assume that $h_i=0$ implies $\tilde{h}_i=0$.
Given pair of pants $dH$ and $d\tilde{H}$, obtained
by doubling hexagons $H$ and $\tilde{H}$ as above, and
given $\Theta=(\vartheta_1,\vartheta_2,\vartheta_3)\in\R^3$,
we define the {\it standard map of pair of pants}
$\sigma(\Theta):dH\rar d\tilde{H}$ as $\sigma^\circ$
on the double $dH^\circ$ of $H^\circ$
and as $\sigma_a(\vartheta_i)$ on each
$A_{t(h_i)}(d\a_i)$.

\begin{notation}
If $g$ and $g'$ are two symmetric bilinear forms,
we will write $g'<\e g$ to mean that $\e g-g'$ is positive-definite
and $|g'|<\e g$ to mean that both $\e g-g'$ and $\e g+g'$ are
positive-definite. If $g$ is positive-definite,
we will say that $g_n\rar g$ if, for every $\e>0$,
there exists $N$ such that $|g_n-g|<\e g$ for all $n\geq N$.
\end{notation}

\begin{lemma}
{\rm(a)}
The standard map $\sigma_a(\vartheta_i): A_{t(h_i)}(d\a_i)\rar
A_{t(\tilde{h}_i)}(d\tilde{\a}_i)$ satisfies
$|\sigma_a(\vartheta_i)^*(g)-g|<c(h_i,\tilde{h}_i,\vartheta_i) g$, where $g$
is the hyperbolic metric and $c(h_i,\tilde{h}_i,\vartheta)$
depends continuously on $h_i,\tilde{h}_i>0$ and $\vartheta_i\in\R$,
and is zero if $h_i=\tilde{h}_i$ and $\vartheta_i=0$.
Moreover,
if $h_i,\tilde{h}_i\geq 0$, then the restriction of $\sigma_a(\vartheta_i)$
to $A_{t(h_i)}(d\a_i)\setminus A_{t(h_i)/n}(d\a_i)$
satisfies $|\sigma_a(\vartheta_i)^*(g)-g|<c(h_i,\tilde{h}_i,\vartheta_i,n) g$,
where $c(h_i,\tilde{h}_i,\vartheta_i,n)$ depends continuously
on $h_i,\tilde{h}_i\geq 0$ and $\vartheta_i\in\R$,
and is zero if $h_i=\tilde{h}_i=0$.

{\rm(b)}
The standard map $\sigma^\circ: H^\circ\rar \tilde{H}^\circ$
satisfies $|(\sigma^\circ)^*(g)-g|<c(h,\tilde{h}) g$, where
$c(h,\tilde{h})$ depends continuously on the $h_i$'s and the $\tilde{h}_j$'s
and is equal to zero if $h=\tilde{h}$.
\end{lemma}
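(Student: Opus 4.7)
The plan is to compute the pullback metric of each standard map in natural coordinates and to read off the distortion bounds directly from the resulting explicit expressions. Both $\sigma_a(\vartheta_i)$ and $\sigma^{\circ}$ are defined by interpolating a small amount of geometric data (hypercycle lengths, twist, or radii/boundary data from the baricenter), so $\sigma_a^{*}g$ and $(\sigma^\circ)^{*}g$ are built from smooth functions of the geometric parameters. In each case the two required facts are: continuous dependence of these functions on the parameters, and the vanishing of the corresponding distortion on the ``diagonal'' where the map reduces to an isometry.

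For (a) I would parametrise $A_{t(h_i)}(d\a_i)$ by Fermi coordinates $(d,\theta)$ around the core geodesic, in which the hyperbolic metric reads $g=dd^{2}+\ell^{2}\cosh^{2}(d)\,d\theta^{2}$ with $\ell=2h_i$ and $\theta\in\R/\Z$. By the recipe of Section~\ref{sec:standard}, $\sigma_a(\vartheta_i)$ takes the form $(d,\theta)\mapsto(F(d),\theta+T(d))$, where $F$ is the piecewise-affine-in-hypercycle-length reparametrisation of the radial coordinate and $T(d)=\vartheta_i\,\Phi\!\bigl(F(d)/\tilde{d}'''\bigr)$ carries the twist. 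Both $F'$ and $T'$ are smooth in $d$ with derivatives depending continuously on $(h_i,\tilde{h}_i,\vartheta_i)$ and reducing to $F(d)=d$, $T\equiv 0$ when $h_i=\tilde{h}_i$ and $\vartheta_i=0$. A direct expansion of $\sigma_a^{*}g$ in this frame yields a $2\times 2$ symmetric matrix whose coefficients are continuous in the parameters and coincide with those of $g$ on the diagonal, which is the first bound. For the second (cusp) bound I would replace Fermi coordinates by the analogous cuspidal coordinates at $h_i=0$, namely the upper half-plane quotiented by a horocyclic translation with the puncture at $\infty$; here the same formulas for $F$ and $T$ remain smooth once one truncates away from the cusp, and the restriction to $A_{t(h_i)}(d\a_i)\setminus A_{t(h_i)/n}(d\a_i)$ supplies precisely this truncation, so the previous continuity argument passes to $h_i,\tilde{h}_i\geq 0$ with continuous dependence on $n$.

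For (b) I would use polar coordinates $(r,\varphi)$ centred at $B$ on $H^{\circ}$ and at $\tilde{B}$ on $\tilde{H}^{\circ}$. The six geodesic segments joining $B$ to the vertices of $H^{\circ}$ cut $H^{\circ}$ into angular sectors, and on each sector $\sigma^{\circ}$ is the piecewise radial homothety $(r,\varphi)\mapsto\bigl(r\,\tilde{\rho}(\Psi(\varphi))/\rho(\varphi),\Psi(\varphi)\bigr)$, where $\rho(\varphi)$, $\tilde{\rho}(\Psi(\varphi))$ are the distances from $B$, $\tilde{B}$ to $\pa H^{\circ}$, $\pa\tilde{H}^{\circ}$ in the corresponding directions, and $\Psi$ is the piecewise-smooth identification of $\pa H^{\circ}$ with $\pa\tilde{H}^{\circ}$ induced by the homothetic matching of the $\a''$- and $\b$-sides. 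The uniform lower bound on the angles at which geodesics from $B$ meet $\pa H^{\circ}$, stated at the end of Section~\ref{sec:collar}, prevents the Jacobian from degenerating, and a direct computation of $(\sigma^{\circ})^{*}g$ in these polar coordinates shows that its deviation from $g$ is a continuous function of $(h,\tilde{h})$ that vanishes when $H$ and $\tilde{H}$ are isometric via the obvious side-labelling, since then $\rho=\tilde{\rho}$, $\Psi=\mathrm{id}$ and $\sigma^{\circ}$ is that isometry.

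The main obstacle is keeping all estimates uniform as the parameters vary. In (a) this amounts to simultaneously controlling the scaling factor $F'(d)$ and the twist derivative $T'(d)$ across the cusp limit, since both enter quadratically into the pullback metric and the relevant coordinate domains themselves degenerate; the truncation $A_{t(h_i)}(d\a_i)\setminus A_{t(h_i)/n}(d\a_i)$ and the explicit $\cosh$-type dependence on $\ell$ are what make this manageable. In (b) the difficulty lies in confirming that the piecewise radial map remains uniformly quasiconformal at the six separating geodesics from $B$ and that the constant $c(h,\tilde{h})$ really depends only on the hexagon parameters; here the lower bound on angles coming from the construction of $H^{\mathrm{conv}}$ is essential, as without it one cannot exclude that $\sigma^{\circ}$ sends radial segments meeting $\pa H^{\circ}$ tangentially, which would spoil the Jacobian bound.
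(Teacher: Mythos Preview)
Your proposal is correct and matches the paper's approach exactly: the paper's entire proof reads ``The proof is by direct estimate \ldots\ The key point in (b) is that the angle of incidence of a geodesic ray that joins $B$ (resp.\ $\tilde B$) to $\pa H^\circ$ (resp.\ $\pa\tilde H^\circ$) is bounded below by $\xi(h)$ (resp.\ $\xi(\tilde h)$),'' which is precisely the computation in Fermi/polar coordinates you outline together with the angle bound you single out as essential. Your write-up is simply a fleshed-out version of what the paper leaves implicit.
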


The proof is by direct estimate. Similar computations are also in
\cite{bishop:quasiconformal}.
The key point in (b) is that
the angle of incidence of a geodesic ray that joins $B$ (resp. $\tilde{B}$)
to $\pa H^\circ$ (resp. $\pa\tilde{H}^\circ$) is bounded below by $\xi(h)$
(resp. $\xi(\tilde{h})$).

\begin{corollary}\label{cor:standard}
{\rm(a)}
Let $(\ell_{k},\vartheta_k)$ be a sequence of pairs in
$\R_{\geq 0}\times\R$ such that: either $\ell_k\rar 0$
or $(\ell_k,\vartheta_k)\rar (\ell,\vartheta)$.
Let $\sigma_{a,(k)}:
A_{t(\ell_k)}(\ell_k)\rar A_{t(\ell)}(\ell)$
be the standard map that twists by an angle $\vartheta_k$.
Then,
$(\sigma_{a,(k)}^{-1})^*(g)\rar g$
(and so $K(\sigma_{(k)}^{-1})\rar 1$) uniformly
on every bounded subset of $A_{t(\ell)}(\ell)$.

{\rm (b)}
Let $\{H_{(k)}\}$ be a sequence of
closed hexagons $H_{(k)}=H(h_{1,(k)},h_{2,(k)},h_{3,(k)})$
such that $h_{i,(k)}\rar h_i$ and let $H=H(h_1,h_2,h_3)$.
Moreover, consider a sequence $\Theta_{(k)}=(\vartheta_{1,(k)},
\vartheta_{2,(k)},\vartheta_{3,(k)})$ such that
$\vartheta_{i,(k)}\rar\vartheta_i$ whenever $h_i\neq 0$.
If $\sigma_{(k)}:dH_{(k)}\rar dH$ is the standard map
between pair of pants with twist data $\Theta_{(k)}$,
then $(\sigma_{(k)}^{-1})^*(g)\rar g$
(and so $K(\sigma_{(k)}^{-1})\rar 1$)
uniformly on the bounded subsets of $dH$.
\end{corollary}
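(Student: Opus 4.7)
The plan is to convert the bilinear-form bounds of the preceding lemma on $\sigma_{(k)}^*g-g$ into the corresponding bounds on $(\sigma_{(k)}^{-1})^*g-g$, and then take $k\to\infty$ using continuity of the coefficient $c(\cdot)$ and its vanishing at the relevant limit parameters. The algebraic passage is elementary: if $(1-c)\,g\le h^*g\le(1+c)\,g$ as bilinear forms on the source with $0<c<1$, then $\frac{1}{1+c}\,g\le (h^{-1})^*g\le\frac{1}{1-c}\,g$ on the target, so
\[
|(h^{-1})^*g-g|<\frac{c}{1-c}\,g,\qquad K(h^{-1})\le\frac{1+c}{1-c}.
\]
Both right-hand sides tend respectively to $0$ and $1$ as $c\to 0$, so it will suffice to exhibit $c_k\to 0$ with $|\sigma_{(k)}^*g-g|<c_k\,g$ on the region of interest.

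For (a) treat the two hypotheses separately. When $(\ell_k,\vartheta_k)\to(\ell,\vartheta)$ with $\ell>0$, apply the first bound of the previous lemma on the whole annulus $A_{t(\ell_k)}(\ell_k)$ with $c_k:=c(\ell_k,\ell,\vartheta_k)$; continuity of $c$ together with its vanishing at the limit parameters forces $c_k\to 0$. When instead $\ell_k\to 0$ (so $\ell=0$ and the target is a cusp), fix a bounded subset $K\subset A_{t(0)}(0)$; it lies outside some horoball $A_{t(0)/n}(0)$ for $n$ large enough. Because the standard map sends hypercycles to hypercycles by the explicit homothety rule of its definition, the preimage $\sigma_{a,(k)}^{-1}(K)$ is contained in $A_{t(\ell_k)}(\ell_k)\setminus A_{t(\ell_k)/n}(\ell_k)$ for all large $k$; the restricted second bound of the lemma then applies with $c_k:=c(\ell_k,0,\vartheta_k,n)$, and $c_k\to c(0,0,\vartheta,n)=0$ by continuity and the vanishing at $h_i=\tilde h_i=0$.

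For (b), decompose $dH_{(k)}=dH^\circ_{(k)}\cup\bigcup_{i=1}^3 A_{t(h_{i,(k)})}(d\alpha_{i,(k)})$ and treat each piece separately. On the hexagonal interior the second estimate of the previous lemma gives $|(\sigma^\circ_{(k)})^*g-g|<c(h_{(k)},h)\,g$ with $c(h_{(k)},h)\to 0$ by continuity. On each collar, invoke part (a): use case (i) when $h_i>0$, with $\ell_k=2h_{i,(k)}$ and $\vartheta_k=\vartheta_{i,(k)}\to\vartheta_i$, and use case (ii) when $h_i=0$, noting that a prescribed bounded subset of $dH$ automatically stays outside a shrinking horoball around every cusp $d\alpha_i$. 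Patching the three (possibly four) bounds produces a single $c_k\to 0$ controlling $|\sigma_{(k)}^*g-g|$ uniformly on the prescribed bounded subset.

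The main technical point is the cusp case of (a): one must confirm that preimages of bounded subsets stay uniformly away from the ``core'' (the cusp at infinity, respectively the shrinking geodesic $d\alpha$), so that the restricted estimate of the lemma really does apply. This is guaranteed by the explicit parametrization of the standard map, which matches the radial parameter $s\in[0,1]$ indexing hypercycles of length $s\ell+(1-s)\ell'''$ on source and target, so the position of a bounded subset relative to the boundary hypercycles is preserved under $\sigma_{a,(k)}^{-1}$.
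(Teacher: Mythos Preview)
Your approach is exactly the paper's: the Corollary is presented there as an immediate consequence of the preceding Lemma, with the only additional remark being the elementary computation that $|f^*g'-g|<\e g$ forces $|K(f)-1|<2\e+o(\e)$. You have simply spelled out the details (the inversion of the metric inequality, the case split in (a), the pair-of-pants decomposition in (b), and the preimage argument in the cusp case) that the paper leaves implicit.

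One caveat worth noting: in case (i) of part (a) you invoke ``vanishing at the limit parameters'' to get $c(\ell_k,\ell,\vartheta_k)\to 0$, but the Lemma only asserts $c(h,\tilde h,\vartheta)=0$ when $h=\tilde h$ \emph{and} $\vartheta=0$, so literally your step needs $\vartheta=0$. This is not your fault: the Corollary as stated (with an arbitrary limit $\vartheta$) does not follow verbatim from the Lemma as stated, and the paper does not address this either. In the only place the Corollary is used (Theorem~\ref{thm:convergence}), the twist data are differences $\frac{1}{2}(\Theta_i(f)-\Theta_i(f_n))$ that do tend to zero, so the intended statement is the case $\vartheta=0$, for which your argument is complete.
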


In fact, straightforward computations
show that, if a map $f:(S,g)\rar (S',g')$ between
surfaces satisfies $|f^*(g')-g|<\e g$,
then $|K(f)-1|<2\e+o(\e)$.
%
%
\end{section}
%
%
%
%
\begin{section}{The Teichm\"uller space}\label{sec:teichmueller}
Let $R$ be a compact oriented surface and assume that $\chi(R)<0$.

An {\it $R$-marked Riemann surface} is an isotopy class
of oriented diffeomorphisms
$[f:R\rar R']$, where $R'$ is a Riemann surface.
Two $R$-marked Riemann surfaces $[f':R\rar R']$ and $[f'':R\rar R'']$
are {\it equivalent} if there exists a biholomorphism $h:R'\rar R''$ such that
$h\circ f\simeq f'$. The {\it Teichm\"uller space} of $R$ is the set
$\Teich(R)$ of equivalence classes of $R$-marked Riemann surfaces.

Clearly, the uniformization theorem endows each Riemann surface
$R'$ with $\chi(R')<0$ with a unique hyperbolic metric.
Thus, we can consider $\Teich(R)$ as the set of equivalence classes
of $R$-marked hyperbolic surfaces.

Here, we recall just two ways to topologize $\Teich(R)$.

The {\it Teichm\"uller distance} between $[f_1:R\rar R_1]$ and
$[f_2:R\rar R_2]$ is
\[
d_T([f_1],[f_2])=\frac{1}{2}\log \inf_{g_i\simeq f_i} K(g_2\circ g_1^{-1})
\]
where clearly $g_i:R\rar R_i$ ranges among quasiconformal homeomorphisms.

However, in order to give a topology to the ``augmented'' Teichm\"uller space
it is easier to use Fenchel-Nielsen coordinates.

A {\it pair of pants decomposition} of $R$ is the choice of a maximal
set of disjoint simple closed curves $\{\g_1,\dots,\g_N\}$ of $R$,
which are not pairwise isotopic
nor homotopically trivial. The {\it length function}
$\ell_{\g_i}:\Teich(R)\rar\R_+$ sends $[f:R\rar R']$ to
the hyperbolic length of the unique geodesic representative
(which we will denote by $f_*(\g_i)$)
in the free homotopy class of $f(\g_i)\subset R'$.

Knowing $\ell_{\g_1},\dots,\ell_{\g_N}$, we are able to uniquely
determine a hyperbolic structure on $R'\setminus\bigcup_i f_*(\g_i)$,
which consists of a disjoint union of pair of pants.

How to glue the pair of pants and obtain $R'$ and $f:R\rar R'$
is encoded in the twist parameters $\tau_1,\dots,\tau_N\in\R$,
which are uniquely determined after fixing some conventions.
The important fact is that, if $[f:R\rar R_1]$ corresponds to
parameters $(\ell_{\g_1},\tau_1,\dots,\ell_{\g_N},\tau_N)$,
then an $R$-marked surface with coordinates
$(\ell_{\g_1},\tau_1+x,\dots,\ell_{\g_N},\tau_N)$ is obtained
by performing a right twist on $R_1$ along $f_*(\g_1)$, with
traslation distance $x$.

The {\it Fenchel-Nielsen coordinates} associated to
$\{\g_1,\dots,\g_N\}$ are the functions $(\ell_{\g_i},\tau_i)$.\\

A hyperbolic surface with cusps is a compact oriented
surface $R''$, endowed with a hyperbolic metric of finite volume
than can blow-up at a finite number of points (the cusps).
A {\it nodal hyperbolic surface} $R'$ is a topological
space obtained from a hyperbolic surface $R''$ by identifying
some cusps of $R''$ in pairs (thus determining the
{\it nodes} $\nu_1,\dots,\nu_k$ of $R'$).
Denote by $R'_{sm}:=R'\setminus\{\nu_1,\dots,\nu_k\}$ the
smooth locus of $R'$.

An {\it $R$-marking} of $R'$ is an isotopy class of
maps $[f:R\rar R']$ such that $f^{-1}(\nu_i)$ is a smooth circle
for $i=1,\dots,k$ and $f$ is a diffeomorphism elsewhere.
Equivalence of $R$-marked hyperbolic surfaces is defined
in the natural way.

The {\it augmented Teichm\"uller space} of $R$ is the
set $\ol{\Teich}(R)$ of equivalence classes of $R$-marked
(possibly nodal) hyperbolic surfaces (see also
\cite{bers:degenerating}). 

$\Tbar(R)$ clearly contains $\Teich(R)$. To describe the topology
around some nodal $[f:R\rar R']$, let $\g_i=f^{-1}(\nu_i)$
be disjoint (homotopically nontrivial) simple closed curves
on $R$ that are shrunk to nodes $\nu_1,\dots,\nu_k$ of $R'$.
Complete $\g_1,\dots,\g_k$ to a pair of
pants decomposition $\{\g_1,\dots,\g_N\}$ of $R$ and
consider the associated Fenchel-Nielsen coordinates
$(\ell_{\g_i},\tau_i)$.
Clearly, $\ell_{\g_1},\dots,\ell_{\g_k}$ continuously
extend to zero on $[f]$, whereas $\tau_1,\dots,\tau_k$
are no longer well-defined at $[f]$.
Thus, declare that a sequence $[f_n:R\rar R_n]$ converges to $[f]$
if and only if
\begin{itemize}
\item[(a)]
$\exists n_0$ such that $f_n^{-1}(\nu)$ is homotopic to
some $\g_i$ for every node $\nu\in R_n$ and every $n\geq n_0$
\item[(b)]
as $n\rar\infty$, we have
$\ell_{\g_i}(f_n)\rar\ell_{\g_i}(f)$ for $i=1,\dots,N$
and $\tau_j(f_n)\rar\tau_j(f)$ for $j=k+1,\dots,N$.
\end{itemize}
\begin{remark}
It turns out that $\Tbar(S)$ is the completion of $\Teich(S)$
with respect to the Weil-Petersson metric \cite{masur:extension}.
\end{remark}
If $S$ is a compact oriented surface with boundary
and $\chi(S)<0$, then we can consider its double $dS$,
obtained by gluing two copies of $S$ (with opposite orientations)
along their boundary. The compact surface (without boundary)
$dS$ comes equipped with a natural orientation-reversing
involution $\sigma$, so that $dS/\sigma\cong S$.

We can define a {\it Riemann surface with boundary}
(resp. {\it hyperbolic surface with geodesic boundary}) to be
a surface $\Sigma$ together with a
conformal (resp. hyperbolic) structure on its double
such that $\sigma$ is anti-holomorphic
(resp. an isometry).
Clearly, this is the same as giving a hyperbolic metric
on $\Sigma$ with {\it geodesic boundary} or a complex structure
on $\Sigma$ which is {\it real} on $\pa\Sigma$.

Thus, we can define the Teichm\"uller space
of $S$ to be $\Teich(S):=\Teich(dS)^{\sigma}$
and, similarly, $\Tbar(S):=\Tbar(dS)^{\sigma}$.

We will represent
a point $[f:dS\rar d\Sigma]\in\Tbar(S)$
by the class of maps $[g:S\rar \Sigma]$ whose doubles
are isotopic to $f$. Clearly, such $g$'s
can shrink a boundary component of $S$
to a cusp of $\Sigma$.
Again, denote by $\Si_{sm}:=(d\Si)_{sm}\cap\Si$ the
smooth locus of $\Si$.
\end{section}
%
%
\begin{section}{A convergence criterion}\label{sec:convergence}
Let $[f:S\rar\Si]\in\Tbar^{WP}(S)$ and let $\g\subset S$ be a
(homotopically nontrivial) simple closed curve.
We define $R_\g(f)$ in the following way.
\begin{itemize}
\item
if $f(\g)$ is homotopic to a boundary cusp or to a boundary geodesic,
then $R_\g(f):=A_{t(\ell_\g(f))}(f(\g))$;
\item
otherwise, $f(\g)$ is homotopic to a node or to a closed geodesic
in the interior of $\Si$, and we let
$R_\g(f):=dA_{t(\ell_\g(f))}(f(\g))$.
\end{itemize}

\begin{theorem}\label{thm:convergence}
Let $[f:S\rar(\Si,g)]\in\Tbar(S)$ and call $\g_1,\dots,\g_k$
the simple closed curves of $S$ that are contracted to a point by $f$,
$R_i=R_{\g_i}(f)$ and $\Si^\circ:=\Si\setminus(R_1\cup\dots\cup R_k)
\subset\Si_{sm}$.
For every sequence $\{[f_n:S\rar(\Si_n,g_n)]\}$
of points in $\Tbar(S)$,
the following are equivalent:
\begin{enumerate}
\item 
$[f_n]\rar [f]$ in $\Tbar(S)$
\item 
$\ell_{\g_i}(f_n)\rar 0$ and
there exist representatives $\tilde{f}_n\in[f_n]$ such that
$\dis(f\circ\tilde{f}_n^{-1})\Big|_{R_{\g_i}(f_n)}$ is standard
and $(\tilde{f}_n\circ f^{-1})^*(g_n)\rar g$ uniformly on
$\Si^\circ$
\item 
$\exists\tilde{f}_n\in[f_n]$ such that
the metrics $(\tilde{f}_n\circ f^{-1})^*(g_n)\rar g$ uniformly
on the compact subsets of $\Si_{sm}$ 
\item 
$\ell_{\g_i}(f_n)\rar 0$ and
$\exists\tilde{f}_n\in[f_n]$ such that
$\dis(f\circ\tilde{f}_n^{-1})\Big|_{R_{\g_i}(f_n)}$ is standard
and
$\dis(\tilde{f}_n\circ f^{-1})\Big|_{\Si^\circ}$ is
$K_n$-quasiconformal with $K_n\rar 1$ as $n\rar\infty$
\item 
$\exists\tilde{f}_n\in[f_n]$ such that,
for every compact subset $F\subset\Si_{sm}$,
the homeomorphism $\dis(\tilde{f}_n\circ f^{-1})\Big|_F$ is
$K_{n,F}$-quasiconformal
and $K_{n,F}\rar 1$ as $n\rar\infty$. 
\end{enumerate}
\end{theorem}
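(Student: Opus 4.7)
The plan is to run the chain of implications $(1)\Rightarrow(2)\Rightarrow(3),(4)\Rightarrow(5)\Rightarrow(1)$. Most arrows are essentially formal once the material of Sections~\ref{sec:collar}--\ref{sec:standard} is available; the substantive steps are the forward construction $(1)\Rightarrow(2)$ and the closing arrow $(5)\Rightarrow(1)$, which is where Theorem~\ref{thm:criterion} enters. The bookkeeping implications fall out as follows: $(2)\Rightarrow(4)$ and $(3)\Rightarrow(5)$ by the elementary observation recorded just after Corollary~\ref{cor:standard}, that $|f^*g'-g|<\varepsilon g$ forces $|K(f)-1|<2\varepsilon+o(\varepsilon)$; while $(2)\Rightarrow(3)$ and $(4)\Rightarrow(5)$ reduce to noting that a compact $F\subset\Si_{sm}$ meets a collar $R_i$ only in a bounded piece staying away from the node, where the ``bounded subset'' clause of Corollary~\ref{cor:standard}(a) gives metric (hence distortion) control of the relevant standard annulus map $\sigma_a$, uniform in $n$.

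For $(1)\Rightarrow(2)$, I complete $\gamma_1,\dots,\gamma_k$ to a pair of pants decomposition $\{\gamma_1,\dots,\gamma_N\}$ of $S$; Fenchel-Nielsen convergence translates into convergence of hexagon side lengths $h_{i,n}\to h_i$ and, where defined, of gluing twists $\vartheta_{j,n}\to\vartheta_j$. I then choose $\tilde{f}_n\in[f_n]$ to be the representative that on each pair of pants of the decomposition coincides with the composition of $f$ with the standard pair of pants map $\sigma(\Theta_n)\colon dH\to dH_n$ of Section~\ref{sec:standard}, using relative Fenchel-Nielsen twists. Since on each $R_{\gamma_i}(f_n)$ the restriction of $f\circ\tilde{f}_n^{-1}$ is by construction a standard annulus map, and since Corollary~\ref{cor:standard}(b) applied on each $dH^\circ$ provides $(\tilde{f}_n\circ f^{-1})^*(g_n)\to g$ uniformly, the union of these pieces covers $\Si^\circ$ and delivers (2).

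The main obstacle is $(5)\Rightarrow(1)$. Fix again a pair of pants decomposition of $S$ extending the contracted family. For each non-pinched $\gamma_j$, $j>k$, the geodesic $f_*(\gamma_j)\subset\Si_{sm}$ admits a compact annular neighborhood $V_j$ on which $\tilde{f}_n\circ f^{-1}$ is $K_{n,V_j}$-quasiconformal with $K_{n,V_j}\to 1$; the quasiconformal invariance of the modulus of a sufficiently wide collar, together with the relation between modulus of the collar and hyperbolic length of its core, yields $\ell_{\gamma_j}(f_n)\to\ell_{\gamma_j}(f)$, and a parallel argument on a compact seam transverse to $f_*(\gamma_j)$ gives $\tau_j(f_n)\to\tau_j(f)$. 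The heart of the proof is $\ell_{\gamma_i}(f_n)\to 0$ for $i\leq k$: around each branch of a node $\nu_i$ of $\Si$ I choose a conformal disc $U_i\cong\Delta$ sending the cusp to $0$; for $\varepsilon>0$ small, the annulus $U_i\setminus\overline{\varepsilon\Delta}$ is compact in $\Si_{sm}$, so $\tilde{f}_n\circ f^{-1}$ restricts to it as a quasiconformal homeomorphism of distortion tending to $1$. Theorem~\ref{thm:criterion}(b) lets me extend this restriction quasiconformally across the puncture to a map of $\Delta$ that sends $0$ to the image of the node or short curve $(f_n)_*(\gamma_i)$ in $\Si_n$, with distortion $\hat{K}_n\to 1$ as $n\to\infty$ and $\varepsilon\to 0$. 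Grötzsch's lemma applied to the target disc then provides an embedded annulus of modulus at least $\hat{K}_n^{-1}\mu(\varepsilon)\to\infty$ around $(f_n)_*(\gamma_i)$, and the collar lemma forces $\ell_{\gamma_i}(f_n)\to 0$. The topological matching of nodes required by the definition of $\Tbar(S)$-convergence then follows automatically, since the length bounds on the non-contracted $\gamma_j$'s prevent any other curve from being pinched. The crux, and the very reason for proving Theorem~\ref{thm:criterion}, is this ability to extend a quasiconformal map defined on a punctured disc across the puncture with controlled distortion.
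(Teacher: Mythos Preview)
Your forward implications and bookkeeping are fine and match the paper's use of Corollary~\ref{cor:standard}. The gap is in $(5)\Rightarrow(1)$, and it concerns the non-pinched Fenchel--Nielsen data, not the pinched lengths. Local quasiconformal control on a compact collar $V_j$ around $f_*(\gamma_j)$ only tells you that the image annulus in $\Sigma_n$ has modulus close to $\mod(V_j)$; this gives a \emph{lower} bound on the modulus of the maximal annulus in the class of $(f_n)_*(\gamma_j)$, hence only an \emph{upper} bound on $\ell_{\gamma_j}(f_n)$. Nothing in your setup prevents $\ell_{\gamma_j}(f_n)$ from being much smaller than $\ell_{\gamma_j}(f)$: to rule that out you would need to bound the maximal modulus from above, which is global information that condition~(5) does not hand you on a compact. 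The twist argument is worse: $\tau_j$ depends on how the two adjacent pairs of pants are glued and cannot be read off from the restriction of the map to any compact ``seam''; two markings can agree on an arbitrarily large compact neighbourhood of $f_*(\gamma_j)$ and still differ by a Dehn twist supported in a thin annulus outside it.

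The paper circumvents exactly this by refusing to argue coordinate by coordinate. After the easy modulus estimate giving $\ell_{\gamma_i}(f_n)\to 0$ (no need for Theorem~\ref{thm:criterion} here), it cuts $S$ along $\gamma_1,\dots,\gamma_k$ to get $S'$, performs an infinite grafting at the new boundaries of $\Sigma'_n$ so that the target becomes a surface with cusps uniformly close to $\Sigma'_n$ in $\Teich(S')$, and \emph{then} invokes Theorem~\ref{thm:criterion}(b) to fill in the map across each cusp of $\Sigma'$. The output is a \emph{global} quasiconformal homeomorphism $\Sigma'\to\gr8(\Sigma'_n)$ with dilatation tending to $1$, which yields Teichm\"uller convergence outright and hence convergence of all Fenchel--Nielsen data simultaneously. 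So the ``crux'' you identify is right in spirit but misplaced: Theorem~\ref{thm:criterion} is not there to push $\ell_{\gamma_i}$ to zero, it is there to promote your locally controlled map to a globally controlled one, which is precisely what your direct argument is missing.
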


\begin{proof}
For (1)$\implies$(2)
and (1)$\implies$(4), complete $\{\g_1,\dots,\g_k\}$
to a maximal system of curves
$\{\g_1,\dots,\g_N\}$ of $S$.
Fenchel-Nielsen coordinates tell us
that
$\ell_{\g_i}(f_n)\rar \ell_{\g_i}(f)$ as $n\rar\infty$
for every $i=1,\dots,N$; moreover, if $\ell_{\g_i}(f)>0$,
then $\tau_{\g_i}(f_n)\rar\tau_{\g_i}(f)$.

Let $S\setminus\cup_i \g_i=P_1\cup \dots \cup P_M$ to be the
pair of pants decomposition associated to $\{\g_1,\dots,\g_N\}$,
in such a way that $P_i$ is bounded by $\g_{i_1},\g_{i_2},\g_{i_3}$.
Call $P_1^{(n)},\dots,P_M^{(n)}$ be the induced
decomposition of $\Si_n$.
Define $\tilde{f}_n$ so that the restriction
of $f\circ\tilde{f}_n^{-1}$ to $P_i^{(n)}$ is a standard
map of pair of pants $P_i^{(n)}\rar f(P_i)$ with
twist data $\frac{1}{2}(\Theta_i(f)-\Theta_i(f_n))$,
where $\theta_\g:=\tau_\g/\ell_\g$ and $\dis\Theta_i=
\left(\vartheta_{\g_{i_1}},\vartheta_{\g_{i_2}},
\vartheta_{\g_{i_3}}
\right)$ be the collection
of twist parameters along $\pa P_i$.
The result follows from Corollary~\ref{cor:standard}.\\

(2)$\implies$(3)
and (4)$\implies$(5)
are also a consequence of Corollary~\ref{cor:standard}.\\

(3)$\implies$(1) relies on the following remark.
Let $\rho_{i,l}$ be a horocycle of $g$-length $l$ in $R_i$.
For every $\e>0$,
there exists $n_0$ such that the $(\tilde{f}_n\circ f^{-1})^*(g_n)$-length
of $\rho_{i,l}$ is less than $(1+\e)l$ for all $\e\leq l\leq 1$ and $n\geq n_0$.
Thus, $\ell_{\g_i}(f_n)\rar 0$ for all $i$.
Moreover, for every simple closed geodesic $\a\subset \Si_n$
we have $f\circ \tilde{f}_n^{-1}(\a)\subset \Si\setminus\cup_i R_i$.
Hence, $\ell_\b(f_n)\rar \ell_\b(f)$ for all simple
closed curves $\b\subset S$. This implies that
$[f_n]\rar [f]$.

(5)$\implies$(1) is more elaborate.
For every $M>0$,
consider a compact annulus $A_i(M)\subset R_i\cap \Si_{sm}$
with modulus $M$. Then, $\exists n_0$ such that
$(f_n\circ f^{-1})(A_i(M))$ has modulus at least $M/2$
and so $\ell_{\g_i}(f_n)\leq 2\pi/M$ for all $n\geq n_0$.
Thus, $\ell_{\g_i}(f_n)\rar 0$.

Let $S'$ be obtained from $S$ by cutting along the $\g_i$'s
and then compactifying so that $S\setminus\bigcup_i \g_i\simeq S'$.
Similarly, let $\Si'_n$ be the compact
hyperbolic surface obtained
from $\Si_n$ by cutting along the geodesics (or the cusps)
homotopic
to $\tilde{f}_n(\g_1),\dots,\tilde{f}_n(\g_k)$ (and then compactifying)
and call $\tilde{f}'_n:S'\rar(\Si'_n,g'_n)$
the associated map. With the same procedure, we also
get an $f':S'\rar(\Si',g')$.

To prove (1), it is sufficient to show that
$[\tilde{f}'_n]\rar [f']$ in $\Teich(S')$.
We split it into two steps.
\begin{itemize}
\item[(a)]
We perform
an infinite grafting at the boundary of $\Si'_n$
by gluing infinite flat cylinders at $\pa\Si'_n$
in order to obtain a new surface $(\gr8(\Si'_n),
\gr8(g'_n))$ with an inclusion $j_n:\Si'_n
\hra \gr8(\Si'_n)$,
and we show that
\[
1/C<\ell_{\a}(\Si'_n)/
\ell_{\a}(\gr8(\Si'_n))<C
\]
for every simple closed curve $\a\subset S'$,
where $C=C(\ell_{\bm{\g}})$
and $C\rar 1$ as $\ell_{\bm{\g}}=\ell_{\g_1}+\dots+\ell_{\g_k}\rar 0$.
Thus, the (Weil-Petersson) distance between $[\tilde{f}'_n:S'\rar \Si'_n]$
and $[j_n\circ \tilde{f}'_n:S'\rar\gr8(\Si'_n)]$ goes to zero.
\item[(b)]
Modifying $j_n\circ \tilde{f}'_n$, we produce a
$\hat{f}'_n:S'\rar\gr8(\Si'_n)$ such that
$\hat{f}'_n\circ f^{-1}:\Si'\rar\gr8(\Si'_n)$ is
a $K_n$-quasiconformal homeomorphism, with $K_n\rar 1$.
\end{itemize}

As for (a), let $\a_n\subset(\Si'_n,g'_n)$ and
$\hat{\a}_n\subset(\gr8(\Si'_n),g''_n)$
be simple closed geodesics for the hyperbolic
metric in the same homotopy class
(here $g''_n$ is hyperbolic and conformally
equivalent to $\gr8(g'_n)$), that is $j_n(\a_n)\simeq\hat{\a}_n$.
Schwarz's
lemma implies that grafting and uniformizing contract lengths,
thus $\exists n_0$ such that the $(g''_n)$-distance
between $j_n(\pa \Si'_n)$ and a unit horocycle is at least
$c_n=-\log(\ell_{\bm{\g}}(g'_n))/2$ for all $n\geq n_0$.

Again by Schwarz's lemma (applied to the lift of $j_n$
to the universal covers
of $\Si'_n$ and $\gr8(\Si'_n)$), we have
$|\nabla j(p)|\geq \tanh d_{g''}(j(p),j(\pa\Si'_n))$
for every $p\in\Si'_n$.

As no simple closed geodesic of $(\gr8(\Si'_n),g''_n)$
enters unit horoballs, we obtain
$\ell_{\a_n}(g'_n)\geq \ell_{j_n(\a_n)}(g''_n)\geq
\ell_{\hat{\a}_n}(g''_n)\geq \ell_{j^{-1}(\hat{\a}_n)}(g'_n)
\tanh(c_n)\geq \ell_{\a_n}(g'_n) \tanh(c_n)$.
As $C:=\tanh(c_n)\rar 1$, we obtain (a).

For part (b), let $F_j:=\Sigma'\setminus \bigcup_i
A_{\frac{t(0)}{j}}(\g'_i)$,
where $\{\g'_i\}$ is the set of cusps of $\Sigma'$ for $j\geq 2$.
The restriction of $j_n\circ \tilde{f}'_n\circ(f')^{-1}$
defines $g_{i,n}$ through the following commutative diagram.
\[
\xymatrix{
\ol{A}_{\frac{t(0)}{j}}(\g'_i) \ar@{^(->}[r] &
\ol{A}_{t(0)}(\g'_i)
\ar@{^(->}[rr]^{j_n\circ \tilde{f}'_n\circ(f')^{-1}} &&
\gr8(\Si'_n) \\
\ol{A}(\exp\{2\pi[1-j/t(0)]\}) \ar@{^(->}[r] \ar[u]^{\cong} &
\ol{A}(\exp\{2\pi[1-1/t(0)]\}) \ar@{^(->}[rr]^{\qquad g_{i,n}}
\ar[u]^{\cong} &&
\ol{\Delta} \ar@{^(->}[u]
}
\]
We can also assume that $\pa\Delta$ is contained in the image of $g_{i,n}$
and that the last vertical arrow sends the origin to the cusp.

As $\tilde{f}'_n\circ (f')^{-1}$ is $K_{n,j}$-quasiconformal on $F_j$,
so is $g_{i,n}$ outside $\ol{A}(\exp\{2\pi[1-j/t(0)]\})$
for every $i$.
It follows from Theorem~\ref{thm:criterion}(b) that
there exists $\hat{g}_{i,n}:\ol{\Delta}\rar\ol{\Delta}$
which is a $\hat{K}_{n,j}$-quasiconformal homeomorphism that fixes
the origin and which coincides with $g_{i,n}$ near $\pa\Delta$.
Thus, replacing $\coprod_i g_{i,n}$ by $\coprod_i \hat{g}_{i,n}$ we obtain
a modification of $\tilde{f}'_n$, which we call
{\it $j$-th modification of $\tilde{f}'_n$}.

Define $\hat{f}'_n$ to be the $j$-th modification of $\tilde{f}'_n$
for all $n_{j-1}\leq n<n_j$ and all $j\geq 2$.
Adjusting the sequence $n_2=1<n_3<n_4<\dots$, one obtains the wished result.
\end{proof}
\end{section}
%
%
\bibliographystyle{amsalpha}
\bibliography{criterion}
\end{document}